\newcommand{\R}{\mathbb{R}}
\newcommand{\Z}{\mathbb{Z}}
\newcommand{\Q}{\mathbb{Q}}
\newcommand{\C}{\mathbb{C}}
\newcommand{\T}{\mathbb{T}}
\newcommand{\Prob}{\mathbb{P}}
\newcommand{\ep}{\epsilon}
\DeclareMathOperator{\supp}{Supp}
\DeclareMathOperator{\Blue}{Blue}
\DeclareMathOperator{\Red}{Red}
\newtheorem{thm}{Theorem}
\newtheorem{result}{Result}[section]
\newtheorem{lem}[result]{Lemma}
\newtheorem{prp}[result]{Proposition}
\theoremstyle{definition}
\newtheorem*{clm}{Claim}
\theoremstyle{remark}
\newtheorem{rmk}[result]{Remark}
\newtheorem*{ack}{Acknowledgements}
\numberwithin{equation}{section}
\newcommand\numberthis{\addtocounter{equation}{1}\tag{\theequation}}
\title{Improved lower bounds for van der Waerden numbers}
\author{Zach Hunter}
\date{\today}
\begin{document}

\maketitle
\begin{abstract}
    Recently, Ben Green proved that the two-color van der Waerden number $w(3,k)$ is bounded from below by $k^{b_0(k)}$ where $b_0(k) = c_0\left(\frac{\log k }{\log \log k}\right)^{1/3}$. We prove a new lower bound of $k^{b(k)}$ with $b(k) =  \frac{c\log k}{\log \log k}$. This is done by modifying Green's argument, replacing a complicated result about random quadratic forms with an elementary probabilistic result.
\end{abstract}

\section{Introduction}

In this paper, we will be concerned with bounding the two-color van der Waerden numbers $w(3,k)$, defined as follows. For any $k\ge 3$, we let $w(3,k)$ denote the smallest $N$ such that for any blue-red coloring of $[N]:= \{1,\dots,N\}$, there either exists a blue arithmetic progression of length $3$, or a red arithmetic progression of length $k$. In terms of upper bounds, the best known result is of the form $w(3,k) < e^{k^{1-c}}$ for some $c> 0$, which was first shown by Schoen \cite{schoen}. This now also follows from the bound on Roth's theorem proven by Bloom and Sisask \cite{bloom}, by a simple density argument\footnote{Observe that for any blue-red coloring of $[N]$ lacking red progressions of length $k$, we must have that the set of blue integers has at least $ N/k-1$ elements, as at least one of every $k$ consecutive integers must be colored blue. So when $N = e^{k^{1-c}}$ for some sufficiently small $c> 0$, we will have (due to the bound on Roth's theorem in \cite{bloom}) that our set of blue integers must be too dense in $[N]$ to lack progressions of length 3.}. 

In \cite[Theorem~1.1]{green}, Green showed that 
$w(3,k) \ge k^{b_0(k)}$, where $b_0(k) = c_0 \left(\frac{\log k}{\log \log k}\right)^{1/3}$ for some $c_0 > 0$. An equivalent way to view the result is as follows. Let $f(N)$ be the smallest $k$ such that $w(3,k) > N$, so that there exists a blue-red coloring coloring of $[N]$ with no blue arithmetic progression of length 3 and no red arithmetic progression of length $f(N)$. The result of \cite{green} implies that $f(N)\le e^{C_0(\log N)^{3/4}(\log\log N)^{1/4}}$ for some $C_0 > 0$.

In this paper, we improve the lower bound of $w(3,k)$ as follows.
\begin{thm}\label{main} For some absolute constants $C,c>0$ the following holds. We have $w(3,k) \ge k^{b(k)}$, where $b(k) = \frac{c\log k}{\log \log k}$. Equivalently, $f(N) \le e^{C(\log N)^{1/2}(\log\log N)^{1/2}}$.
\end{thm}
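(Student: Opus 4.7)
My plan is to construct a two-coloring of $[N]$, with $N$ as large as $k^{c\log k/\log\log k}$, in which the blue set is $3$-AP-free and the red set contains no $k$-AP. I will follow Green's Behrend-plus-Freiman framework, but replace his random-quadratic-forms input by a direct probabilistic argument on a single Behrend sphere.

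\textbf{Setup and skeleton.} Fix a prime $p$ and a dimension $d$ to be optimized at the end. The digit map $\phi(\mathbf{x}) = \sum_{i=1}^{d} x_i p^{i-1}$ sends $\{0,\dots,M\}^d$ (with $M \approx p/3$) bijectively onto a subset of $[0,p^d)$ and is a Freiman $2$-isomorphism there, so any $3$-AP-free subset of $\{0,\dots,M\}^d \subset \Z^d$ lifts to a $3$-AP-free subset of $[N]$ for $N \approx p^d$. For the pigeonhole-optimal level $s$, the Behrend sphere $S_s = \{\mathbf{x} \in \{0,\dots,M\}^d : |\mathbf{x}|^2 = s\}$ satisfies $|S_s| \gtrsim p^d/(dp^2)$, and its image $B_0 := \phi(S_s)$ is a $3$-AP-free subset of $[N]$ of density $\gtrsim 1/(dp^2)$.

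\textbf{Elementary probabilistic step.} Since $B_0$ may fail to meet some adversarial $k$-APs, introduce randomness: take blue to be a random sub-sample of $B_0$ (each element kept independently with probability $\alpha$), which preserves $3$-AP-freeness automatically. The problem then reduces to a \emph{uniform intersection lemma}: every $k$-AP $P \subset [N]$ satisfies $|P \cap B_0| \geq m$ for some $m = \Omega(\log N)$, uniformly in $P$. Given this, a Chernoff estimate and union bound over the $O(N^2)$ choices of $k$-AP yield a realization of the sub-sample that hits every $k$-AP, completing the construction for parameters $p \sim (\log k)^{O(1)}$, $d \sim (\log k/\log\log k)^{O(1)}$, optimized so that $N = k^{c\log k/\log\log k}$.

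\textbf{Proving the uniform lemma, and the main obstacle.} This is the replacement for Green's random-quadratic-forms input, and is intended to be elementary. For a $k$-AP $P = \{a + jb : 0 \leq j < k\}$, analyze the digit lifts $\phi^{-1}(a + jb) \in \Z^d$: absent carries in the base-$p$ expansion, $j \mapsto \phi^{-1}(a+jb)$ is a genuine $\Z^d$-AP, so $j \mapsto |\phi^{-1}(a+jb)|^2$ is a quadratic polynomial in $j$ taking values in a controlled range, from which a direct counting argument (pigeonholing over the quadratic's level sets and using a random choice of $s$, or equivalently a random shift) produces the lower bound $|P \cap B_0| \gtrsim \log N$. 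The main technical hurdle is that for generic common differences $b$ the base-$p$ carries corrupt the lift and prevent $|\phi^{-1}(a+jb)|^2$ from being exactly quadratic; the crux of the proof should be an elementary estimate showing that carries only perturb the lift on a small fraction of the indices $j$ (or alternatively that one can pass to a long carry-free sub-progression), so the uniform intersection estimate persists. Once this is done, routine parameter optimization delivers the bound $w(3,k) \geq k^{c \log k/\log\log k}$.
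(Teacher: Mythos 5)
Your proposal diverges substantially from the paper's, and I think it has a genuine gap in its logic. The paper does \emph{not} use a single Behrend sphere plus subsampling, nor the digit-map framework. Instead it works directly on the torus $\T^D$, takes $M$ well-distributed centers $x_1,\dots,x_M$, places a thin annulus of \emph{independently random} radius $\mathbf{e}_i \in \{0,\dots,K\}$ around each center, and colors $n\in[N]$ blue iff $n\theta$ falls in one of these $M$ annuli. The key geometric input (Proposition~\ref{step1}) is deliberately much weaker than a ``uniform intersection lemma'': it only says that for typical $\theta$, any $X$-AP contains $Y$ points, each landing within distance $\rho/5$ of $Y$ \emph{distinct} centers; no claim is made that the sphere/annulus is actually hit. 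The probability boost then comes from the construction's own randomness: each of those $Y$ near-misses has probability $\geq 1/(K+1)$ of being converted into a hit by the corresponding independent radius choice, so $\Prob(\text{no hit}) \le (1-1/(K+1))^Y$, which with $Y \gg K\log N$ beats the union bound over $O(N^2)$ progressions.

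The logical gap in your plan is the subsampling step. If blue is a random \emph{subset} of a fixed $B_0$ and $B_0$ fails to intersect some $k$-AP, then every subsample also fails; so subsampling never helps you hit more progressions, only fewer. This means your ``reduction'' to the uniform intersection lemma $|P \cap B_0| \geq m$ is not a reduction at all: you still need to prove that a single fixed Behrend sphere meets every $k$-AP, which is precisely the kind of deterministic, uniform-in-$P$ statement whose difficulty bottlenecked Green's argument (his Proposition 5.4). And once you have $m \ge 1$ uniformly, the Chernoff step is moot, since you could just take blue $= B_0$. What you want instead is for the randomness to be built into the construction itself so that each $k$-AP can be attacked with many independent coin flips; that is exactly what the independent radii $\mathbf{e}_i$ achieve in the paper, replacing ``$P$ hits $B_0$ many times'' by the far easier ``$P$ comes near many independently-randomized annuli.''

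Separately, the digit-map framework reintroduces the carry problem, which your proposal acknowledges but does not resolve. For adversarial $b$ there is no reason for long carry-free sub-progressions to exist, and in fact this obstruction is one reason the literature moved from $\{0,\dots,M\}^d\hookrightarrow\Z$ to the torus picture $n\mapsto n\theta \in \T^D$, where multiplication is an exact homomorphism and carries simply do not arise. Finally, the paper's bound $f(N)\le e^{C(\log N)^{1/2}(\log\log N)^{1/2}}$ corresponds to $D\asymp\sqrt{\log N/\log\log N}$ and progressions of length $X=N^{O(1/D)}$; your parameterization $p\sim(\log k)^{O(1)}$, $d\sim(\log k/\log\log k)^{O(1)}$ can be made to give the right magnitudes, but only after the two issues above are fixed, and the fix amounts to rebuilding the argument around the torus and independent random annuli rather than a fixed sphere.
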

\begin{rmk}\label{supposed upper bound}In the end of \cite[Section~2]{green}, Green stated that it is reasonable to believe $w(3,k) \le k^{O(\log k)}$ or equivalently $f(N) \ge  e^{c (\log N)^{1/2}}$. This is what we would achieve if we could color $[N]$ so the the blue set has size $Ne^{-\Theta(\sqrt{\log N})}$ (like in Behrend's construction) and the red set ``behaved randomly''.

\end{rmk}
Since Theorem~\ref{main} proves $w(3,k) \ge k^{(\log k)^{1-o(1)}}$, Remark~\ref{supposed upper bound} suggests our result is likely to be essentially best possible.

This improvement in Theorem~\ref{main} follows from a few adjustments to the argument of \cite{green}. Said adjustments also happen to greatly simplify the proof of our main theorem. In particular, we replace the rather involved proof of \cite[Proposition~5.4]{green} which was done in \cite[Sections~9-16]{green}, with a short probabilistic proof.

An additional goal of this paper is to be a self-contained and (relatively) accessible resource about lower-bounding $w(3,k)$. Given how our methods build upon the work of \cite{green}, we reproduce several results from \cite{green} for the sake of completeness. This is done with the permission and encouragement of Green, and in all such instances we properly reference which parts of \cite{green} were reproduced. 

\begin{ack}The author would like to thank Ben Green for many helpful comments concerning the presentation of this paper. The author also thanks Zachary Chase for his encouragement, and for checking a draft of this paper. Lastly, the author is grateful for the helpful reports of two anonymous referees.
\end{ack}

\subsection{Sketch}

The essential ideas are as follows. Since the work of Behrend \cite{behrend}, the largest known constructions of large 3-AP-free\footnote{For positive integer $k$, we will refer to arithmetic progressions of length $k$ as $k$-APs.} sets have been based on the observation that $d$-dimensional spheres do not contain 3 collinear points (see e.g., \cite{elkin,wolf}). These constructions roughly work by first establishing a map $\varphi$ from $S\subset [N]$ to $d$-dimensional space, such that 3-APs in $S$ are mapped to certain collinear triples; one then uses spheres or annuli to color $\varphi(S)$ avoiding such collinear triples.

We will be working with the framework from \cite{wolf}, which considered thin annuli in a torus. Here, one picks a dimension $D$ and considers the $D$-dimensional torus $\T^D = \R^D/\Z^D$. Given $\theta \in \T^D$, and a blue-red coloring $f:\T^D\to \{\Blue,\Red\}$, we can obtain a blue-red coloring $F=F_{f,\theta}$ of $[N]$ by associating $n \in [N]$ with $\theta n \in \T^D$ and assigning $F(n) = f(\theta n)$.

We shall now describe the construction from \cite{green} and our modification. Afterwards we will discuss the ``meaning'' behind these constructions and why our new construction was able to yield better bounds.
\subsubsection{Constructions}

In \cite{green}, one first fixes a well-distributed set of points $x_1,\dots,x_M \in \T^D$ (such a set will occur with high probability when choosing these points uniformly at random). We then randomly create a blue-red coloring $f$ so that $f^{-1}(\Blue)$ is the set of translates\[\bigcup_{i \in [M]} (x_i + \mathcal{A}) \]where $\mathcal{A}$ is the projection to $\T^D$ of a random ellipsoidal annulus with small radius and thin width. Lastly, one chooses $\theta \in \T^D$ uniformly at random and colors $[N]$ with $F_{f,\theta}$. With high probability, this process ``succeeds'', yielding a coloring of $[N]$ that lacks blue 3-APs and long red APs.

In our new construction, we again fix a well-distributed set of points $x_1,\dots,x_M \in \T^D$ and choose $\theta \in \T^D$ uniformly at random. However, we shall randomly create $f$ so that \[f^{-1}(\Blue) = \bigcup_{i \in [M]} (x_i +\mathcal{A}_i)\]where each $\mathcal{A}_i$ is the projection of a thin annulus with a random small radius (these radii being chosen independently). As before, $[N]$ is colored by $F_{f,\theta}$, and with high probability we ``succeed''.

To summarize the differences, Green used translates of a single randomly determined (ellipsoidal) annulus (with random eccentricity and fixed radius), while our construction uses translates of (circular) annuli each being determined independently at random (with fixed eccentricity and random radius). But more importantly (as will be elaborated in Subsection~\ref{how}), Green only considers his single instance of randomness, while we take advantage of having multiple instances of randomness (additionally, we make more careful use of how $N,D,M$ grow in relation to one another, see Subsection~\ref{random annuli}). Indeed, one can modify this write-up to show (via a slightly less natural argument) that the same bound as Theorem~\ref{main} can be achieved using translates of a fixed annuli by exploiting the randomness of $x_i$.

\subsubsection{Why (roughly) these colorings work}\label{how}

We emphasize that the following two subsections are sketches, and details are omitted for ease of exposition.

It will be relatively straightforward to show that these constructions lack blue 3-APs with high probability. Indeed, an observation from \cite{wolf} is that a single translate of a thin\footnote{Here and before, ``thin'' is defined with respect to $N$.} annulus with radius $\le 1/4$ will be 3-AP-free with high probability. With the centers $x_1,\dots, x_M$ being well-distributed, it will continue to be the case that the coloring (which uses multiple translates of thin annuli) is free of blue 3-APs with high probability.

At the same time, one can prove that with high probability, these constructions will lack long red progressions. In \cite{green}, this was done in two steps. First, it is shown that because the centers of our annuli are well-distributed, any long (possibly red) AP will have a point which is very close to one of these centers. In \cite[Proposition~5.4]{green}, it was then shown that there exists an ellipsoidal annulus so that any long progression which has a point very close to the center must intersect this annulus.

Meanwhile, our construction prevents long red arithmetic progressions as follows. We observe that a slightly stronger conclusion can be deduced from the first step in \cite{green}. Namely, because the centers of our annuli are well-distributed, for any long (possibly red) AP we can find \textit{many} centers which are very close to points in the AP. The radii of our annuli are chosen randomly in such a way that whenever a point is very close to a center, there is a non-zero chance chance it will be colored blue. Because of how many centers are very close to points in the AP, the chance none of the points in the AP are blue is exceedingly small, preventing long red APs with high probability.

Now, before we get into the next subsection, we wish to comment on why using multiple translates of annuli is useful here (since using a single annulus/sphere typically sufficed for Behrend-esque constructions in prior literature). At the most basic level, this helps make the red complement look more random, which happens to be useful. Concretely, we are concerned about progressions that are concentrated around some subtorii with small codimension.

\subsubsection{Analysis of quantifiers}

Given $x_0,\alpha \in \T^D$ and a positive integer $X$, we write $(x_0,\alpha,X)$-AP to denote the set $\{x_0,x_0+\alpha,\dots,x_0+(X-1)\alpha\}\subset \T^D$. Note that the $X$-AP $P = \{n_0,n_0+d,\dots,n_0 + (X-1)d\}\subset [N]$ will be monochromatic under $F_{f,\theta}$ if and only if the $(\theta n_0,\theta d,X)$-AP is monochromatic under $f$.

Thus, to show $F$ is free of monochromatic arithmetic progressions of length $X$, it would suffice to show that $f$ does not monochromatically color the $(x_0,\alpha,X)$-AP for any $x_0,\alpha \in  \{\theta,2\theta,\dots,N\theta \}$. However, it will be more natural to prove stronger statements where we do not impose restrictions on the values $x_0$ will take\footnote{A reason for this is that the orbit $\{\theta,2\theta, \dots,N\theta\}$ should be very well-distributed for typical $\theta \in \T^D$. Thus for any $x_0 \in \T^D$ we should be able to find $x$ in the orbit which closely approximates $x_0$, and these should yield very similar progressions. Meanwhile, slightly altering the value of $\alpha$ can significantly change long progressions (a good example is when $\alpha = 0$), thus it is still important to control the values $\alpha$ can take.}.

For example, both aforementioned constructions will (with high probability) prevent 3-APs in a ``uniform'' sense. Given $\theta \in \T^D$, we have a set of possible common differences $S_\theta :=  \{\theta,2\theta,\dots,N\theta \}$. It will be the case that if $\alpha$ does not have an exceptionally small norm, then for all choices of $x_0 \in \T^D$ the $(x_0,\alpha,3)$-AP will not be monochromatically blue under $f$ . Choosing $\theta \in \T^D$ uniformly at random we have that $d\theta$ is uniformly distributed for each $d \in [N]$, and then by a union bound we get that with high probability there will be no $\alpha \in S_\theta$ with an exceptionally small norm. 
 
In \cite{green}, red progressions are also prevented in a uniform sense. Meaning that choosing $\theta \in \T^D$ randomly and then choosing a random ellipsoidal annulus $A$ to create $f$, with high probability for all $\alpha \in S_\theta$, there will be no $x_0 \in \T^D$ such that the $(x_0,\alpha,X)$-AP is monochromatically red under $f$ (where $X= N^{O(1/\sqrt{D})})$.

While avoiding blue 3-APs in a uniform sense is quite manageable, doing the same for long red APs was very tricky and caused a bottleneck for the lower bound in \cite{green}. Thus, it seemed desirable to circumvent avoiding red APs in such a strong sense. The construction from this paper manages to do this.

In particular, we will show for typical $\theta$, that for any fixed choice of $x_0 \in \T^D, \alpha \in S_\theta$ when we randomly determine $f$ by choosing the radii of our annuli, the probability that the $(x_0,\alpha,X)$-AP  (where $X = N^{O(1/D)}$) is monochromatically red under $f$ is very small (less than $N^{-3}$, so that with high probability, none of the choices of $x_0,\alpha$ corresponding to the $<N^2$ progressions in $[N]$ occur). 

Another way to think about the above is in terms of Geometric Ramsey Theory (see e.g. \cite{conlon}). Given a subset $\Omega$ of $\T^D$ and $\alpha \in \T^D$, we say that $\Omega$ is $(\cdot, \alpha,X)$-AP-free if for every $x \in \T^D$ the $(x,\alpha,X)$-AP is not contained in $\Omega$. In the construction of \cite{green}, when $f$ is chosen randomly, $f^{-1}(\Blue)$ is guaranteed to be $(\cdot,\alpha,3)$-free for a set of $\alpha$ with measure $\ge 1-1/N^2$, and with high probability $f^{-1}(\Red)$ will be $(\cdot,\alpha,X)$-free on a set of $\alpha$ with measure $\ge 1-1/N^2$. And thus choosing $\theta$ randomly, with high probability there will not be $\alpha\in S_\theta$ where $f^{-1}(\Blue)$ (respectively $f^{-1}(\Red)$) is not $(\cdot,\alpha,3)$-free (respectively $(\cdot,\alpha,X)$-free). The construction of this paper does something which more resembles a half-multiplicity analogue (half-multiplicity Ramsey numbers for graphs have recently been considered in \cite{sawin}), where $f^{-1}(\Blue)$ is guaranteed to be $(\cdot,\alpha,3)$-free for some set of $\alpha$ with measure $\ge 1-1/N^2$, and for a set of $\alpha$ with measure $\ge 1-1/N^2$ the collection of $(x_0,\alpha,X)$-APs contained in $f^{-1}(\Red)$ is expected to be ``sparse'' in a sense.

\subsection{Outline}

Our new ideas are in Sections~\ref{setup} and \ref{final}. In Sections~\ref{bluecase} and \ref{diophantine} we reprove results from \cite{green} that are necessary but whose statements did not substantially change. In Appendix~\ref{ax lattice} we prove a slight modification of Lemma~A.2 from \cite[p. 62]{green}. For completeness, in Appendix~\ref{ax fourier} we include proofs of some lemmas whose statements are identical to results in \cite[Appendix~\ref{ax fourier}, p. 63-67]{green}.

More specifically, in Section~\ref{setup}, we establish the magnitudes of various parameters, and then formally go over how we will construct colorings of $[N]$ with random translates of annuli. Then in Section~\ref{bluecase} we handle blue progressions. Next in Section~\ref{diophantine} we confirm that ``diophantine'' $\theta \in \T^D$ are still typical when using our modified choice of parameters. In Section~\ref{final}, we handle the red progressions.

\subsection{Definitions}

We will reuse the notation from \cite{green}.

For positive integer $n$, we shall write $[n]:=\{1,\dots,n\}$.

For positive integer $D$, we will write $\T^D$ to denote the $D$-dimensional torus which is $\R^D/\Z^D$. Naturally, we shall use $\T$ as shorthand for $\T^1$.

For $x\in \T$, we will write $||x||_\T= \min_{n\in\Z}|x-n|$, the distance from $x$ to the nearest integer. For $x=(x_1,\dots,x_D)\in \T^D$, we will write $||x||_{\T^D} =\max_{i\in [D]}||x_i||_\T$.

For $\xi = (\xi_1,\dots,\xi_D)\in \Z^D$, we will write $|\xi|:=\max_{i\in [D]}|\xi_i|=||\xi||_\infty$.

Outside of Appendix~\ref{ax fourier}, $\pi$ will always be used to denote the natural projection $\pi:\R^D\to\T^D$ given by $x\mapsto x+\Z^D$. Despite $\pi$ not being injective, as an abuse of notation we will write $\pi^{-1}(x)$ to denote the unique $y
\in (-1/2,1/2]^D$ satisfying $\pi(y) = x$.

Finally, given $\ep >0$, we will write $B_\ep(0)$ to denote the Euclidean ball in $\R^D$ of radius $\ep$ around $0$, that is to say $B_\ep(0):= \{x\in \R^D: ||x||_2 <\ep \}$.

\subsubsection{Fourier Transforms}

We will take Fourier transforms of functions on $\Z^D$ and $\T^D$. We will always denote this with the hat symbol, which we define as follows:
\begin{itemize}
    \item If $f:\Z^D\to \C$, then $\hat{f}(\theta) = \sum_{n \in \Z^D} f(n) e(-n\cdot \theta) $ for $\theta \in \T^D$.
    \item If $f:\T^D\to \C$, then $\hat{f}(\xi) = \int_{\T^D} f(x) e(-\xi\cdot x) $ for $\xi \in \Z^D$.
\end{itemize}
We will only be considering the Fourier transform of smooth, rapidly decaying functions where convergence is not an issue.

\section{Setup}\label{setup}
We shall take $c = 0.01$. The exact value of $c$ is not very important, one need only remember that $c> 0$ but is not too large.

We will also use absolute constants $C_1,C_2>0$ which are determined later on. It will suffice to take $C_1 \ge 12$ and $C_2 \ge 8c^{-1}C_1=800C_1$. 

We will have a parameter $D$, which we will be assuming is sufficiently large. Let\footnote{We will often omit writing the floor function for large constants that are supposed to be integers.} $\rho = 1/D^4,M = \rho^{-(1/4+c)D}$ and $N = D^{cD^2/2}$.

We will sometimes write $X$ to denote $N^{100(C_2+2)/D}$, and $Y$ to denote $\rho^{-cD}$. We also shall write $K$ to denote $ \rho N^{4/D} $.

For reference, the following relationships between the size of parameters are intended, and should be kept in mind. We have that $K,Y,M,\rho^{-D},X$ are all of shape $D^{\Theta(D)}$ with 
\[\log K < \log Y <\log M < \log \rho^{-D} <\log X,\] and we have designed our parameters so that each of the ratios $\frac{\log Y}{\log K},\frac{\log M}{\log Y},\frac{\log \rho^{-D}}{\log M},\frac{\log X}{\log \rho^{-D}}$ is sufficiently large (for example, in the proof of Proposition~\ref{red} we wanted to have $Y\ge K^2$ so that $(1-1/K)^Y$ was very small, thus we chose our constants to ensure this). Meanwhile, we also have $X = N^{O(1/D)}$, and made sure $\frac{\log X^{D/100}}{\log N}$ is sufficiently large.

\subsection{Well-distributed centers}

In the rest of the paper, we will fix $\rho = D^{-4}$. However, in the following proposition we need only assume $\rho< 1/D^2$.

\begin{prp}
Suppose $D$ is sufficiently large, and $\rho<1/D^2$. There exist $x_1,\dots,x_M \in \T^D$ such that the following holds.
\begin{enumerate}\label{welld}
    \item For $i_1,i_2,i_3\in [M]$, either $||x_{i_1}-2x_{i_2}+x_{i_3}||_{\T^D}>10\rho $ or $i_1 = i_2 = i_3$.
\vspace{1em}
    \item Let $V\le \Q^D$ be any subspace with dimension at most $D/100$. For every $x^* \in \T^D$, there exist distinct $j_1,\dots,j_Y\in [M]$ and (not necessarily distinct) $u_1,\dots,u_Y\in \pi(B_{\rho/10}(0))$ such that $||\xi\cdot(x_{j_k}+u_k-x^*)||_\T <1/100$ for all $\xi \in V\cap \Z^D$ with $|\xi|\le \rho^{-3}$ and all $k \in [Y]$.
\end{enumerate}
\begin{rmk}
We mostly follow the proof of Proposition~4.1 from \cite[p. 11-14]{green}, but with two adjustments to Condition 2. First, we introduce an error $u \in \pi(B_{\rho/10}(0))$ and prove a new claim to utilize this error. This adjustment allows us to show Condition 2 holds for all $V$ with dimension $\le D/100$, improving upon \cite{green} where it could only be shown to hold for $V$ with dimension $\le \ep \sqrt{D}$ for some sufficiently small $\ep > 0$. Our second adjustment is observing that ``many'' choices of centers work. This strengthened conclusion requires no new effort to prove, but will be useful later on.
\end{rmk}
\begin{proof}

Recall $Y = \rho^{-cD},M = \rho^{-(1/4+c)D} = \rho^{-D/4}Y$. We will select $x_1,\dots,x_M\in \T^D$ independently and uniformly at random. Let $F_1$ (respectively $F_2$) denote the event that the selected $x_1,\dots,x_M$ fail to satisfy Condition 1 (respectively Condition 2). We will show that (for sufficiently large $D$) $\Prob(F_1),\Prob(F_2) <1/4$, hence with probability at least 1/2, $x_1,\dots,x_M$ satisfy the necessary conditions.

We first bound $\Prob(F_1)$. For any $(i_1,i_2,i_3)\in [M]^3$  where $i_1,i_2,i_3$ are not all equal, we have that $x_{i_1}-2x_{i_2}+x_{i_3}$ is uniformly distributed on $\T^D$. Thus, $\Prob(||x_{i_1}-2x_{i_2}+x_{i_3}||_{\T^D} \le  10\rho )=(20\rho)^D$. By a union bound over all $<M^3$ such choices of $(i_1,i_2,i_3)$, the first condition fails with probability at most $M^3(20\rho)^D <1/4$ for large $D$. 

We now turn to bounding $\Prob(F_2)$. Here we will be repeatedly applying union bounds. Due to the size of $M$, we may partition $[M]$ into $Y$ parts of size $\rho^{-D/4}$, $S_1,\dots,S_Y$. We fix such a partition. Given a selection $x_1,\dots,x_M$ and $y\in [Y]$, we say the selection is \textit{$y$-bad} if there exists a choice of $V\le \Q^D,\, \dim V\le D/100$ and $x^*\in \T^D$ so that for each $j \in S_y,u \in \pi(B_{\rho/10}(0))$, there exists $\xi\in V\cap \Z^D$ with $|\xi|\le \rho^{-3}$ such that $||\xi\cdot (x_j+u-x^*)||_\T\ge 1/100$ (in which case we say $V,x^*$ demonstrate $y$-badness).

Clearly, for Condition 2 to fail, there must be some $y\in [Y]$ where our selection is $y$-bad (otherwise, since the sets $S_1,\dots,S_Y$ are disjoint, we may choose $j_y \in S_y,u_y\in \pi(B_{\rho/10}(0))$ for $y\in Y$). For $y \in [Y]$, let $E_y$ be the event our selection is $y$-bad. Since $x_1,\dots,x_M$ are selected independently, and each set $S_y$ satisfies $|S_y| = \rho^{-D/4}$, we have that each $E_y$ has the same distribution as $E_1$. Thus, by a union bound, $\Prob(F_2) \le Y\Prob(E_1)$. It remains to bound $\Prob(E_1)$.

We shall assume $V$ is spanned (over $\Q$) by vectors $\xi \in \Z^D$ with $|\xi|\le \rho^{-3}$ (since otherwise, we may pass from $V$ to the subspace spanned by these vectors). There are at most $\frac{D}{100}(3\rho^{-3})^{D^2/100}\le \rho^{-D^2}$ such spaces $V$ with dimension $\le D/100$ (choose the dimension $m \le D/100$, and then $m$ basis vectors $\xi \in\Z^D$ with $|\xi|\le \rho^{-3}$).

We fix such a $V$, and let $E_1(V)$ denote the event that there exists some $x^*\in \T^D$ such that $V,x^*$ demonstrate $E_1$ occurs. By Lemma~\ref{lattice gen}, there exists some choice of $\xi_1,\dots,\xi_m \in \Z^D, m = \dim V \le D/100$, with $|\xi_i|\le \rho^{-3}D \le \rho^{-4}$ for each $i\in [m]$, such that each $\xi \in \Z^D\cap V$ with $|\xi| \le \rho^{-3}$ can be written as a $\Z$-linear combination $\xi = \sum_{i=1}^m n_i \xi_i$ where the coefficients $n_i$ satisfy $|n_i|\le m!(D\rho^{-3})^m < \frac{1}{100m}\rho^{-7m}$. Consequently, we see $\xi_1,\dots,\xi_m$ are linearly independent and span $V$ (over $\Q$).

So given $x^*\in \T^D$, if there exists $j \in S_1,u \in \pi(B_{\rho/10}(0))$ such that $||\xi_i\cdot (x_j+u -x^*)||_\T \le \rho^{7m}$ for all $i\in \{1,\dots,m\}$, we will get that for each $\xi \in V\cap \Z^D$ with $|\xi|\le \rho^{-3}$\[||\xi(x_j+u-x^*)||_\T \le \rho^{7m}\sum_{i=1}^m n_i< \frac{1}{100}.\]Thus $E_1(V)$ can only occur if there is some $x^*\in \T^D$ where such $j \in S_1,u \in \pi(B_{\rho/10}(0))$ cannot be found.

For convenience, we define $\phi:\T^D\to \T^m$ so that $x\mapsto (\xi_1\cdot x,\dots, \xi_m\cdot x)$, and we remark that $\phi$ is a homomorphism.

We divide $\T^m$ into $\rho^{-7m^2}\le \rho^{-D^2}$ boxes of sidelength $\rho^{7m}$; for $E_1$ to occur there must be some box $B$ that does not contain $\phi(x_j+u)$ for any $j \in S_1,u \in \pi(B_{\rho/10}(0))$. We fix a box $B$.

Choosing $x\in \T^D$ uniformly at random, we note that $\phi(x)$ is uniformly distributed over $\T^m$. This is a well-known fact, but for completeness we mention a brief proof. Letting $f(t) = e(\gamma \cdot t)$ be any non-trivial character over $\T^m$, we will have $\gamma_1\xi_1+\dots+\gamma_m\xi_m\neq 0$ since $\{\xi_1,\dots,\xi_m\}$ are linearly independent over $\Z$, thus\[\int_{\T^D} f(\xi_1\cdot x,\dots,\xi_m\cdot x)\,dx = \int_{\T^D}e((\gamma_1\xi_1+\dots+\gamma_m\xi_m)\cdot x)\,dx= 0\] and so by Weyl's equidistibution theorem $(\xi_1\cdot x,\dots,\xi_m\cdot x)=\phi(x)$ is uniformly distributed.

Next, we require the following claim.
\begin{clm}
There is a finite set $U \subset \pi(B_{\rho/10}(0))$ with $|U| \ge \rho^{10m-7m^2}$ such that for any distinct $u,u'\in U$, $||\phi(u-u')||_{\T^m} \ge \rho^{7m}$. \begin{proof}
Since $||w||_2\le D^{1/2}||w||_\infty$ for $w\in \R^D$, and $\rho^5 \le \frac{\rho}{10D^{1/2}}$ for $D\ge 2$ (because $\rho<1/D^2$), we have that \[  W:= \{\rho^{7m}\eta: \eta \in \Z^D, |\eta|\le \rho^{5-7m}\}\subset B_{\rho/10}(0).\] For $w,w'\in W$ with $w= \rho^{7m}\eta,w' = \rho^{7m}\eta'$, we have that \[|\xi_i\cdot(w-w')| \le 2D\rho \le 2/D\le 1/2\] for all $i\in [m]$ (recalling $|\xi_i|\le \rho^{-4}$, $\rho<1/D^2$, and assuming $D\ge 4$), whence \[||\phi\circ \pi (w-w')||_{\T^m} = \rho^{7m}|(\xi_1\cdot (\eta-\eta'),\dots,\xi_m\cdot (\eta-\eta'))|.\]For distinct $v,v'\in V$, we have that $v-v' \in V\setminus \{0^D\}$ and thus $\xi_i \cdot (v-v') \neq 0$ for some $i\in [m]$ (since $V$ is spanned by $\xi_1,\dots,\xi_m$). Thus, for distinct $\eta,\eta' \in V\cap \Z^D$ with $|\eta|,|\eta'| \le \rho^{5-7m}$, letting $w = \rho^{7m}\eta,w'= \rho^{7m}\eta'$, we have $w,w'\in W$ and $||\phi\circ \pi (w-w')||_{\T^m} \ge \rho^{7m}$. 

Noting $W \subset (-1/2,1/2]^D$, we have that $|\pi(W')| = |W'|$ for any $W' \subset W$. From the above, for any $T \subset V\cap \Z^D$ with $|\eta| \le \rho^{5-7m}$ for all $\eta \in T$, we get that $W' :=\{\rho^{7m} \eta: \eta \in T \}$ is a subset of $W$ where $||\phi\circ\pi (w-w')||_{\T^m}\ge \rho^{7m}$ for distinct $w,w' \in W'$ (and it is obvious that $|W'| = |T|$). Thus it remains to find such a $T$ with $|T| \ge \rho^{10m-7m^2}$.

By the triangle inequality, $\left|\sum_{i=1}^m n_i\xi_i\right|\le \rho^{-4} \sum_{i=1}^m |n_i|\le \rho^{-5}\max_i|n_i|$, and since linear combinations of linear independent vectors are unique, we have that the set of linear combinations \[T :=\{\sum_{i=1}^m n_i \xi_i: n_i\in \Z, 0\le n_i \le\rho^{10-7m}\}\]is a set of at least $\rho^{10m-7m^2}$ vectors $\eta \in V\cap \Z^D$ with $|\eta|\le \rho^{5-7m}$ (recalling $\xi_1,\dots,\xi_m$ are linearly independent elements of $V\cap \Z^D$). We conclude the proof of our claim by taking $U = \pi( \{\rho^{7m}\eta: \eta \in T\})$.
\end{proof}
\end{clm}

By definition of $U$ (and the fact that $\phi$ is a homomorphism), for distinct $u,u' \in U$ and any $x \in \T^D$ we have that $\phi(x+u)$ and $\phi(x+u')$ belong to different boxes of sidelength $\rho^{7m}$. Furthermore, since a constant shift of the uniform distribution remains uniformly distributed, we have that $\phi(x_j+u)$ is uniformly distributed for each $u \in U$ and $j \in S_1$. Applying the principle of inclusion-exclusion, for any $j \in S_1$ we have
\begin{align*}
    p&:=\Prob( \phi(x_j+u) \in B\textrm{ for some } u\in U) &\\
    &= \sum_{u \in U} \Prob(\phi(x_j+u) \in B)- \sum_{U' \subset U, |U'| > 1} (-1)^{|U'|}\Prob(\phi(x_j+u') \in B\textrm{ for all } u'\in U') &\\
    &= \sum_{u \in U} \Prob(\phi(x_j+u) \in B)- \sum_{U' \subset U, |U'| > 1} 0\\
    &= |U| \operatorname{Vol}(B).\\
\end{align*}
\noindent Here the second-to-last line used the fact that $\phi(x+u),\phi(x+u')$ always belong to distinct boxes for distinct $u,u'\in U$, and the last line used the fact that $\phi(x_j+u)$ was uniformly distributed for each $u$. Recalling $B$ is an $m$-dimensional box with sidelengths $\rho^{7m}$, we know it has volume $\rho^{7m^2}$. Meanwhile, since $|U| \ge \rho^{10m-7m^2}$, we see $p \ge \rho^{10m}$. 

It follows that for each $j\in S_1$, \[\Prob(\phi(x_j+u)\not\in B \textrm{ for all }u \in U) \le 1-\rho^{10m}.\] By independence, \[\Prob(\phi(x_j+u)\not\in B \text{ for all } j \in S_1, u \in U\subset \pi(B_{\rho/10}(0))) \le (1-\rho^{10m})^{|S_1|}.\] Recalling $|S_1| = \rho^{-D/4}, 10m \le 10D/100\le D/8$, the RHS above is \[(1-\rho^{10m})^{\rho^{-D/4}}\le \exp(-\rho^{10m}\rho^{-D/4})\le \exp(-\rho^{-D/8}).\] By union bound, considering the $Y=\rho^{-cD}\le \rho^{-D^2}$ events $E_1,\dots,E_Y$, the $\le \rho^{-D^2}$ choices of $V$, and the $\le \rho^{-D^2}$ choices of $B$, we get that \[\Prob(F_2) \le \rho^{-3D^2} \exp(-\rho^{-D/8}). \]For large $D$, the above is less than $1/4$, completing the claim (to see the inequality, we let $Z$ denote $1/\rho>D^2>D$, and observe that the above equals $Z^{3D^2}\exp(-Z^{D/8})$ which can be seen to tend quickly towards zero).
\end{proof}
\end{prp}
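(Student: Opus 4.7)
My plan is the probabilistic method: sample $x_1, \dots, x_M \in \T^D$ i.i.d. uniformly and show that each condition fails with probability less than $1/4$, so a good selection exists.

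Condition 1 is routine. For any triple $(i_1, i_2, i_3) \in [M]^3$ that is not all-equal, the random variable $x_{i_1} - 2x_{i_2} + x_{i_3}$ is uniform on $\T^D$, so its chance of having $\T^D$-norm at most $10\rho$ is $(20\rho)^D$. A union bound over the at most $M^3$ triples yields failure probability at most $M^3 (20\rho)^D = 20^D \rho^{(1/4 - 3c)D} = o(1)$, since $c$ is tiny.

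For Condition 2 the plan is to partition $[M]$ into $Y$ disjoint blocks $S_1, \dots, S_Y$ of size $\rho^{-D/4}$; a successful selection just needs one good pair $(j_y, u_y)$ per block, so by symmetry it suffices to bound the single-block failure probability. Fix $V \le \Q^D$ with $m := \dim V \le D/100$ and $x^* \in \T^D$. Invoking Lemma~\ref{lattice gen} on $V \cap \Z^D$ (restricted to vectors of norm $\le \rho^{-3}$), I pick a basis $\xi_1, \dots, \xi_m$ with $|\xi_i| \le \rho^{-4}$ that expresses every short $\xi \in V \cap \Z^D$ as an integer combination with tiny coefficients; this reduces the required inequality to $\|\xi_i \cdot (x_j + u - x^*)\|_\T \le \rho^{7m}$ for all $i \in [m]$. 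Define $\phi : \T^D \to \T^m$ by $x \mapsto (\xi_1 \cdot x, \dots, \xi_m \cdot x)$, which (by a short character computation using $\Z$-independence of $\xi_1, \dots, \xi_m$) pushes Haar measure to Haar measure. Tile $\T^m$ by $\rho^{-7m^2}$ boxes of sidelength $\rho^{7m}$; letting $B$ be the box containing $\phi(x^*)$, my job is to show that with high probability some $(j,u)$ gives $\phi(x_j + u) \in B$.

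The new ingredient (absent in Green's original argument) is exploiting the auxiliary freedom $u \in \pi(B_{\rho/10}(0))$ to amplify the probability. I will build a finite set $U \subset \pi(B_{\rho/10}(0))$ with $|U| \ge \rho^{10m - 7m^2}$ such that $\{\phi(u) : u \in U\}$ lies in pairwise distinct tiles. Concretely take $U = \pi(\{\rho^{7m} \eta : \eta \in T\})$ where $T$ ranges over nonnegative integer combinations $\sum n_i \xi_i$ with $0 \le n_i \le \rho^{10 - 7m}$; the bound $|\xi_i| \le \rho^{-4}$ keeps this lattice inside $B_{\rho/10}(0)$, while $\Z$-independence of the $\xi_i$ yields the required $\phi$-separation of size $\rho^{7m}$. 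Once $U$ is built, the events $\{\phi(x_j + u) \in B\}_{u \in U}$ are disjoint (they force $\phi(x_j+u)$ into different tiles) and each has probability $\rho^{7m^2}$ by uniformity, so by inclusion–exclusion their union has probability at least $|U| \rho^{7m^2} \ge \rho^{10m}$. Independence across $j \in S_1$ then gives single-block failure probability at most $(1 - \rho^{10m})^{\rho^{-D/4}} \le \exp(-\rho^{-D/8})$, which comfortably absorbs the crude $\rho^{-O(D^2)}$ union bound over choices of $V$, $B$, and $y$.

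The main obstacle will be constructing the separating set $U$: we need many translates inside the tiny ball $B_{\rho/10}(0)$ whose $\phi$-images separate at the tile scale $\rho^{7m}$. The control $|\xi_i| \le \rho^{-4}$ supplied by Lemma~\ref{lattice gen} is precisely what makes step-$\rho^{7m}$ lattice points in the $\xi_i$-directions fit inside $B_{\rho/10}(0)$ while staying $\phi$-spread in $\T^m$; this delicate balance is the heart of the argument.
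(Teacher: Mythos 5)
Your proposal is correct and follows essentially the same route as the paper: i.i.d. uniform sampling, the partition of $[M]$ into $Y$ blocks of size $\rho^{-D/4}$, Lemma~\ref{lattice gen} to get a short basis $\xi_1,\dots,\xi_m$, the homomorphism $\phi$ pushing Haar measure to $\T^m$, the box tiling at scale $\rho^{7m}$, the separating set $U=\pi(\{\rho^{7m}\sum n_i\xi_i: 0\le n_i\le \rho^{10-7m}\})$ inside $B_{\rho/10}(0)$, inclusion--exclusion giving per-$j$ hitting probability $\ge\rho^{10m}$, independence over $j\in S_1$, and a crude $\rho^{-O(D^2)}$ union bound. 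The only detail your sketch glosses over is the wrap-around check $|\xi_i\cdot(w-w')|\le 1/2$ needed to pass between $\R^D$ and $\T^m$ when establishing the $\phi$-separation of $U$, but this is a routine verification the paper also carries out explicitly.
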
 

\subsection{Random annuli}\label{random annuli}

Rather than use ellipsoidal annuli with eccentricity chosen randomly as in \cite{green}, we use circular annuli with random radii. Also, instead of letting $N$ be any sufficiently large integer with respect to $D$, we require that $N$ is a specific integer which is appropriately large with respect to $D$ (see Remark~\ref{DwithN} for further discussion on this point).

For the rest of this paper, we some fix some choice of $x_1,\dots,x_M$ as guaranteed by Proposition~\ref{welld} (here we take $\rho = D^{-4}$, as specified in the beginning of Section~\ref{setup}).

Let $K = \lfloor \rho N^{4/D} \rfloor $. For $k \in \{0,\dots, K\}$, we let $A_k = \{x \in \R^D:kN^{-4/D}\le||x||_2<(k+1)N^{-4/D}\}$. We will pick $\mathbf{e}=(\mathbf{e}_1,\mathbf{e}_2,\dots,\mathbf{e}_M)$ from $\{0,\dots,K\}^M$ uniformly at random, and will also pick $\theta\in \T^D$ uniformly at random.

Given $\mathbf{e}\in \{0,\dots,K\}^M, \theta \in \T^D$, we let
\[\Blue_{\mathbf{e},\theta} =\{n \in [N]: n\theta \in \bigcup_{i =1}^M (x_i + \pi(A_{\mathbf{e}_i}))\}\]
\[\Red_{\mathbf{e},\theta} = [N]\setminus \Blue_{\mathbf{e},\theta}.\]
We wish to prove 
\begin{prp}\label{blue}We have
\[\Prob_\theta(\Blue_{\mathbf{e},\theta}\textrm{ contains a $3$-AP for some }\mathbf{e}\in \{0,\dots,K\}^M) \le 1/N.\]
\end{prp}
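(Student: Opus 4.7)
The plan is to follow the classical Behrend-style observation that three collinear points on a thin spherical annulus must be very close together, and to combine this with the well-distributed centers of Proposition~\ref{welld} to force a strong Diophantine condition on $\theta$, which a union bound then handles.

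First, I would suppose some $3$-AP $(n_1,n_2,n_3)\subset [N]$ (with $n_1+n_3=2n_2$) lies in $\Blue_{\mathbf{e},\theta}$ for some $\mathbf{e}\in \{0,\dots,K\}^M$; pick indices $i_1,i_2,i_3\in [M]$ so that $n_j\theta\in x_{i_j}+\pi(A_{\mathbf{e}_{i_j}})$ and set $y_j := n_j\theta - x_{i_j}$. Using $(n_1-2n_2+n_3)\theta=0$ gives $y_1-2y_2+y_3 = -(x_{i_1}-2x_{i_2}+x_{i_3})$ in $\T^D$. Since $A_k\subset B_{(K+1)N^{-4/D}}(0)$ and $(K+1)N^{-4/D}\le \rho + N^{-4/D}$, the triangle inequality yields $||x_{i_1}-2x_{i_2}+x_{i_3}||_{\T^D}\le 4(K+1)N^{-4/D}\le 10\rho$ for $D$ large, so Condition~(1) of Proposition~\ref{welld} forces $i_1 = i_2 = i_3 =: i$. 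Lifting each $y_j$ to $\tilde{y}_j\in A_{\mathbf{e}_i}\subset\R^D$, the vector $\tilde{y}_1-2\tilde{y}_2+\tilde{y}_3\in\Z^D$ has Euclidean norm $<1$, hence vanishes, giving $\tilde{y}_2 = (\tilde{y}_1+\tilde{y}_3)/2$. The parallelogram identity combined with $||\tilde{y}_j||_2\in[kN^{-4/D},(k+1)N^{-4/D})$ (where $k=\mathbf{e}_i$) then produces
\[ ||\tilde{y}_1-\tilde{y}_3||_2^2 = 2||\tilde{y}_1||_2^2 + 2||\tilde{y}_3||_2^2 - 4||\tilde{y}_2||_2^2 < 4(2k+1)N^{-8/D} \le 12K N^{-8/D}, \]
so $||\tilde{y}_1-\tilde{y}_3||_2\le 4\sqrt{\rho}\,N^{-2/D}$ using $K\le \rho N^{4/D}$. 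Since $\tilde{y}_1-\tilde{y}_3\equiv -2d\theta\pmod{\Z^D}$ for $d := n_2-n_1\in [1,N]$, this gives $||2d\theta||_{\T^D}\le 4\sqrt{\rho}\,N^{-2/D}$.

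To finish, for each fixed $d\in[1,N]$ the map $\theta\mapsto 2d\theta$ is a surjective group homomorphism of $\T^D$, so it pushes Haar measure to Haar measure; thus $2d\theta$ is uniform and $\Prob_\theta(||2d\theta||_{\T^D}\le r) = (2r)^D$ for $r<1/2$. Applying this with $r = 4\sqrt{\rho}\,N^{-2/D}$ and union-bounding over the $\le N$ possible values of $d$ yields a total probability of at most $N\cdot(8\sqrt{\rho}\,N^{-2/D})^D = 8^D \rho^{D/2}/N = 8^D D^{-2D}/N \le 1/N$ for $D$ large. Crucially, no step in the argument depends on $\mathbf{e}$, so the same bound simultaneously controls the union over all $\mathbf{e}\in\{0,\dots,K\}^M$. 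There is no serious bottleneck in the proof; the one routine check is that $4(K+1)N^{-4/D}<1$ so the integer-lift step is valid, which is automatic given our choice of parameters.
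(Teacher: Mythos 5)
Your proposal is correct and follows essentially the same route as the paper: reduce to a single annulus via Condition~(1) of Proposition~\ref{welld}, apply the parallelogram law on a thin spherical annulus to force a Diophantine condition on $\theta$, and union-bound over the common difference $d$. The only cosmetic differences are that you bound $\|2d\theta\|_{\T^D}$ rather than $\|d\theta\|_{\T^D}$ (equally valid since $\theta\mapsto 2d\theta$ preserves Haar measure) and you fold the paper's Lemma~\ref{smallv} and Proposition~\ref{ifblue} into a single inline argument.
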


\begin{prp}\label{red}For any $\theta\in \Theta$,
\[\Prob_{\mathbf{e}}(\Red_{\mathbf{e},\theta}\textrm{ contains a $X$-AP}) \le 1/N.\]
\end{prp}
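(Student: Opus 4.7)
My plan is to fix $\theta\in\Theta$ and an individual candidate AP $P=\{n_0,n_0+d,\dots,n_0+(X-1)d\}\subset[N]$, show $\Prob_{\mathbf{e}}(P\text{ is monochromatically red})\le N^{-3}$, and then union-bound over the fewer than $N^2$ such APs. Setting $x_0=n_0\theta$ and $\alpha=d\theta$, $P$ is monochromatically red under $F_{\mathbf{e},\theta}$ iff the $(x_0,\alpha,X)$-AP in $\T^D$ avoids every set $x_i+\pi(A_{\mathbf{e}_i})$, so the task reduces to bounding this avoidance probability.

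Step one is to harness Proposition \ref{welld}(2). Using the diophantine properties of $\theta\in\Theta$ to be established in Section \ref{diophantine}, I would produce a rational subspace $V\le\Q^D$ with $\dim V\le D/100$ that contains every ``resonant'' frequency of $\alpha$, i.e.\ every $\xi\in\Z^D$ with $|\xi|\le\rho^{-3}$ and $||\xi\cdot\alpha||_\T$ negligibly small. Applying the proposition with this $V$ and $x^*=x_0$ yields distinct $j_1,\dots,j_Y\in[M]$ and errors $u_1,\dots,u_Y\in\pi(B_{\rho/10}(0))$ satisfying $||\xi\cdot(x_{j_k}+u_k-x_0)||_\T<1/100$ for all such $\xi$ and all $k\in[Y]$. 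Since resonant $\xi$'s have $||\xi\cdot\alpha||_\T$ tiny, this bound is preserved after subtracting any $t\alpha$ with $t\in[X]$.

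The main obstacle is step two: upgrading Fourier-closeness on $V$ to Euclidean closeness along the AP. Concretely, for each $k\in[Y]$ I need to exhibit a time $t_k\in[X]$ with $||x_0+t_k\alpha-x_{j_k}||_{\T^D,2}<(K+1)N^{-4/D}\approx\rho$, so that $x_0+t_k\alpha-x_{j_k}$ lies in $\pi(\bigcup_{e=0}^K A_e)$. The $V$-closeness already controls the displacement in the ``slow'' (resonant) directions. For the ``fast'' directions---those $\xi$ with $\xi\cdot\alpha$ not tiny---the values $\xi\cdot t\alpha$ equidistribute on $\T$ as $t$ runs through $[X]$, and because the orbit closure of $\{t\alpha\}$ lives in a subtorus of codimension at most $\dim V\le D/100$, the inequality $X\gg\rho^{-D}$ coming from our parameter setup should make that orbit $\rho$-dense in its closure (given good enough equidistribution, which is precisely the content of the diophantine hypothesis on $\theta$). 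A simultaneous Dirichlet/pigeonhole argument along these lines should then yield $t_k$ with $||x_0+t_k\alpha-(x_{j_k}+u_k)||_{\T^D,2}<9\rho/10$, and absorbing the Euclidean error $u_k\in\pi(B_{\rho/10}(0))$ pins down a unique annulus index $e_k\in\{0,\dots,K\}$ with $x_0+t_k\alpha-x_{j_k}\in\pi(A_{e_k})$.

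Once the pairs $(t_k,e_k)_{k\in[Y]}$ are in hand, the conclusion is immediate. The events $\{\mathbf{e}_{j_k}=e_k\}$ are mutually independent (distinct $j_k$'s, and the coordinates of $\mathbf{e}$ are i.i.d.\ uniform on $\{0,\dots,K\}$), each of probability $1/(K+1)$, and occurrence of any one of them makes $x_0+t_k\alpha$ blue. Hence
\[\Prob_{\mathbf{e}}(P\text{ is monochromatically red})\le\bigl(1-\tfrac{1}{K+1}\bigr)^Y\le\exp\bigl(-Y/(K+1)\bigr)\le\exp(-K),\]
where the final inequality uses $Y\ge K^2$ from our parameter choices. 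Since those same choices also force $K\gg\log N$, this is well below $N^{-3}$, and a union bound over the $<N^2$ candidate APs in $[N]$ completes the proof.
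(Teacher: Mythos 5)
Your overall structure—bound the red probability for a fixed $X$-AP by $N^{-3}$ and union-bound over the $<N^2$ candidate APs, using Proposition~\ref{welld}(2) to pin down $Y$ distinct centers and then exploiting independence of the coordinates of $\mathbf{e}$—is exactly the paper's plan, and your final probabilistic computation (the bound $(1-1/(K+1))^Y \le \exp(-Y/(K+1)) \le N^{-3}$ using $Y\gg K^2$ and $K\gg\log N$) matches the paper's.

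However, the step you flag as the ``main obstacle''—converting Fourier-closeness along the low-dimensional resonant subspace into genuine $\ell^2$-closeness of some orbit point $x_0+t_k\alpha$ to $x_{j_k}$—is precisely the content of the paper's Proposition~\ref{step1}, and what you propose for it does not close the gap. Two issues. First, the ``orbit closure'' reasoning is off: the resonant frequencies $\Lambda=\{\xi:|\xi|<\rho^{-3},\,\|\xi\cdot\alpha\|_\T\le\rho^{-2D}X^{-1}\}$ consist of $\xi$ for which $\xi\cdot\alpha$ is merely \emph{small modulo }$1$, not rational, so the orbit closure of $\{t\alpha\}$ is typically all of $\T^D$; the real content is a \emph{quantitative, finite-time} equidistribution statement over only $X$ steps, and ``$X\gg\rho^{-D}$'' by itself is far from enough to pigeonhole your way into a $\rho$-ball around a prescribed target. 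Second, your Dirichlet/pigeonhole sketch has to simultaneously control all non-resonant $\xi$ with $|\xi|\le\rho^{-3}$, of which there are about $\rho^{-3D}$, and it is not clear a direct counting argument does this. The paper resolves exactly this by a Fourier-analytic counting argument: it chooses a non-negative-spectrum majorant $\chi$ of $\mathds{1}_{\pi(B_{\rho/10}(0))}$ with $\hat\chi$ supported on $|\xi|\le\rho^{-3}$ (Lemma~\ref{con chi}), a Fej\'er-type weight $w$ on $[-X/5,X/5]$ (Lemma~\ref{con w}), and shows $\sum_i w(i)\,\chi(\theta(n_1+id)-u_k-x_{j_k})>0$ by splitting the dual sum into $\xi=0$ (main term $\ge X$), resonant $\xi\in\Lambda$ (non-negative since $\cos(2\pi\xi\cdot(\theta n_1-u_k-x_{j_k}))>0$ by Proposition~\ref{welld}(2) and $\hat\chi,\hat w\ge 0$), and non-resonant $\xi$ (controlled by the decay $|\hat w(\beta)|\lesssim X^{-1}\|\beta\|_\T^{-2}$). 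So the probabilistic half of your proposal is correct and matches the paper, but the deterministic ``many nearby centers'' step is still missing and is not a routine pigeonhole; supplying a proof of it (or the paper's Fej\'er-kernel argument) is required.
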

\noindent Here $\Theta \subset \T^D$ will be such that $\mu_{\T^D}(\Theta) \ge 1-  1/N$. We defer our exact definition of $\Theta$ to Section~\ref{diophantine}, but essentially $\theta \in \T^D$ will not belong to $\Theta$ if the orbit $\{\theta n: n \in [N]\}$ can contain long subprogressions that are concentrated around slices of high codimension.

It clearly follows that our strategy succeeds with probability at least $1-3/N$, proving Theorem~\ref{main}. 

Indeed, for sufficiently large $D$, if we choose $\theta \in \T^D$ randomly, then with probability at least $1-2/N > 0$ we will have that $\theta$ satisfies Proposition~\ref{blue} and also $\theta \in \Theta$. So we may fix such a $\theta$. Choosing $\mathbf{e} \in \{0,\dots,K\}^M$ randomly, with probability at least $1-1/N > 0$, $\mathbf{e}$ satisfies Proposition~\ref{red}, and so we may fix such a $\mathbf{e}$. So by Propositions~\ref{blue} and \ref{red} we have $\Blue_{\mathbf{e},\theta}$ lacks 3-APs and $\Red_{\mathbf{e},\theta}$ lacks $X$-APs. Using this blue-red coloring we get $f(N) \le X = N^{O(1/D)}$ (i.e., $f(D^{cD^2/2})) \le D^{O(D)} $) and since $f$ is increasing we are done.

\begin{rmk}\label{DwithN}
An interesting artefact of the approach we present will be that if we fix $D$ and let $N$ get too large, then our $D$-dimensional strategy does not (necessarily) work; it will be essential\footnote{Here, we mean that our proof fails unless $D$ grows with $N$.} to have $D$ grow with $N$ to prove non-trivial bounds on $f(N)$ (and consequently $w(3,k)$). 

This is in contrast to how the $D$-dimensional strategy in \cite{green} works for all sufficiently large $N$. Thus, one may view the methods of this paper as ``unstable'' and the methods of \cite{green} as ``stable''.

Originally, the author believed that the techniques of this paper did not have a ``stable analogue'', in some well-defined sense. This was because our proof requires $Y=\rho^{-cD}$ to be greater than $K \approx N^{4/D}$. 

However, we later realized that if we let $\rho$ decrease with $N$ (e.g., it would work for $\rho$ to be roughly $N^{-5c^{-1}/D^2}$), we can continue to avoid red progressions of length $N^{O(1/D)}$. For simplicity, we decided to write-up the unstable approach where $\rho$ is fixed given $D$. Obtaining the stable analogue is completely straightforward from our work, the only change that must be made is in Section~\ref{diophantine} where $D^{C_1}$ should be replaced by $\rho^{-C_1}$ in the definition of $\Theta$.

\end{rmk}

\section{Blue APs}\label{bluecase}
Our proof of Proposition~\ref{blue} works the same as the proof of Proposition~5.1 from \cite[p. 17-18]{green}. The author decided to reorganize the argument slightly for aesthetic reasons.

We need the following quantitative result concerning progressions in an annulus. For completeness, we include its proof, which has also appeared in \cite{green,wolf}.
\begin{lem}\label{smallv}Suppose $u,v \in \R^d$ are such that $\{u,u+v,u+2v\}\subset A_k$ for some $k\in \{0,\dots,K\}$. Then $||v||_2 < \frac{1}{2}N^{-2/D}$.
\begin{proof}
Recall the parallelogram law, which states
\[2||v||_2^2 = ||u||_2^2+||u+2v||_2^2-2||u+v||_2^2.\]It follows that \begin{align*}
    ||v||_2^2 &\le \left((k+1)N^{-4/D}\right)^2-(kN^{-4/D})^2\\
    &=(2k+1)N^{-8/D}\\
    &\le (2K+1)N^{-8/D}\\
    &\le (2\rho+N^{-4/D}) N^{-4/D} .\\
\end{align*}For sufficiently large $D$, $2\rho+N^{-4/D}<1/4$. Hence the result follows.
\end{proof}
\end{lem}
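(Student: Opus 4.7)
The plan is to exploit the parallelogram identity
\[
2\|v\|_2^2 \;=\; \|u\|_2^2 + \|u+2v\|_2^2 - 2\|u+v\|_2^2,
\]
which is the natural tool here because all three points $u$, $u+v$, $u+2v$ have been assumed to lie in the single annulus $A_k$, so each of $\|u\|_2$, $\|u+v\|_2$, $\|u+2v\|_2$ is constrained to lie in the narrow interval $[kN^{-4/D}, (k+1)N^{-4/D})$. The identity converts the collinearity/spacing information into a quantitative bound on $\|v\|_2$.

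Using the membership in $A_k$, I would bound each squared norm: $\|u\|_2^2$ and $\|u+2v\|_2^2$ are at most $((k+1)N^{-4/D})^2$, while $\|u+v\|_2^2$ is at least $(kN^{-4/D})^2$. Plugging into the identity gives
\[
2\|v\|_2^2 \;\le\; 2(k+1)^2 N^{-8/D} - 2k^2 N^{-8/D} \;=\; 2(2k+1)N^{-8/D},
\]
so $\|v\|_2^2 \le (2k+1)N^{-8/D} \le (2K+1)N^{-8/D}$.

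Finally I would use the definition $K = \lfloor \rho N^{4/D}\rfloor \le \rho N^{4/D}$ (with $\rho = 1/D^4$) to rewrite the bound as $(2\rho + N^{-4/D})N^{-4/D}$. For $D$ sufficiently large, both $2\rho$ and $N^{-4/D}$ are much smaller than $1/8$, so the prefactor is less than $1/4$, yielding $\|v\|_2 < \tfrac{1}{2}N^{-2/D}$.

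There is no real obstacle: the lemma is an algebraic identity plus a size estimate on $K$ relative to $N^{4/D}$. The only point to watch is that we are really using both the inner radius (to bound $\|u+v\|_2^2$ from below) and the outer radius (to bound the endpoints from above), and that the telescoping $(k+1)^2 - k^2 = 2k+1$ grows only linearly in $k$, which is what makes the small parameter $\rho$ enter multiplicatively in the final bound.
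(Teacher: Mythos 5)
Your proof is correct and follows the same route as the paper: apply the parallelogram law, bound $\|u\|_2^2$ and $\|u+2v\|_2^2$ from above and $\|u+v\|_2^2$ from below using the annulus constraint, telescope to $(2k+1)N^{-8/D}$, then use $K\le\rho N^{4/D}$ and the smallness of $\rho$ to close. No differences worth noting.
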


We shall use this to prove the following.
\begin{prp}\label{ifblue}Suppose $\theta \in \T^D$ is such that $||d\theta||_{\T^D}>\frac{1}{2}N^{-2/D}$ for all $d\in [N]$. Then, for all $\mathbf{e} \in \{0,\dots,K\}^M$, $\Blue_{\mathbf{e},\theta}$ contains no blue $3$-APs.
\end{prp}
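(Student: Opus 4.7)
The plan is to argue by contradiction. Suppose $\{n, n+d, n+2d\} \subset [N]$ is a blue $3$-AP under $\Blue_{\mathbf{e},\theta}$. Then there exist indices $i_1, i_2, i_3 \in [M]$ and representatives $a_j \in A_{\mathbf{e}_{i_j}} \subset \R^D$ with
\[
n\theta = x_{i_1} + \pi(a_1), \quad (n+d)\theta = x_{i_2} + \pi(a_2), \quad (n+2d)\theta = x_{i_3} + \pi(a_3).
\]
Applying the alternating combination $(n) - 2(n+d) + (n+2d) = 0$ kills the $\theta$ terms and yields the key identity
\[
x_{i_1} - 2 x_{i_2} + x_{i_3} = \pi(2a_2 - a_1 - a_3) \quad \text{in } \T^D.
\]
Since every $a_j$ lies in some annulus $A_k$ with $k \le K$, we have $\|a_j\|_2 < (K+1)N^{-4/D} \le \rho + N^{-4/D}$, and for $D$ large enough the quantity $\eta := 4(\rho + N^{-4/D})$ is much smaller than $\min\{1, 10\rho\}$ (using $\rho = D^{-4}$ and $N^{-4/D} = D^{-2cD}$).

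Now I split into two cases according to Condition (1) of Proposition~\ref{welld}. In the first case $i_1, i_2, i_3$ are not all equal, so $\|x_{i_1} - 2x_{i_2} + x_{i_3}\|_{\T^D} > 10 \rho$. But the left side equals $\|\pi(2a_2 - a_1 - a_3)\|_{\T^D}$, which is bounded by $\|2a_2 - a_1 - a_3\|_\infty \le \|2a_2 - a_1 - a_3\|_2 \le \eta < 10\rho$, a contradiction.

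In the second case $i_1 = i_2 = i_3 = i$, so the identity becomes $\pi(2a_2 - a_1 - a_3) = 0$, i.e.\ $2a_2 - a_1 - a_3 \in \Z^D$. But its Euclidean norm is at most $\eta < 1$, so the only possibility is $2a_2 - a_1 - a_3 = 0$. Thus $a_1, a_2, a_3$ form a genuine $3$-AP in $\R^D$ contained in the single annulus $A_{\mathbf{e}_i}$, and setting $v := a_2 - a_1$ Lemma~\ref{smallv} gives $\|v\|_2 < \tfrac{1}{2} N^{-2/D}$. Since $d\theta = (n+d)\theta - n\theta = \pi(v)$, we conclude $\|d\theta\|_{\T^D} \le \|v\|_\infty \le \|v\|_2 < \tfrac{1}{2} N^{-2/D}$, with $d \in [N]$, contradicting the Diophantine hypothesis on $\theta$.

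The only place that requires any care is verifying the parameter inequalities $\eta < 10\rho$ and $\eta < 1$; everything else is a clean cascade from the identity above, Proposition~\ref{welld}(1), and Lemma~\ref{smallv}. I do not anticipate a serious obstacle, since all terms $N^{-4/D}$ are negligible compared with $\rho = D^{-4}$ for $D$ sufficiently large.
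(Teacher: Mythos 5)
Your proof is correct and follows essentially the same route as the paper's: use Proposition~\ref{welld}(1) to force $i_1 = i_2 = i_3$, then lift to $\R^D$ and apply Lemma~\ref{smallv} to contradict the Diophantine hypothesis on $\theta$. Your handling of the lift is slightly slicker than the paper's --- observing that $2a_2 - a_1 - a_3$ lies in $\Z^D$ with Euclidean norm less than $1$, hence vanishes, replaces the paper's explicit ``no wrap-around'' argument via $\pi^{-1}$ on the box $(-1/2,1/2]^D$ --- but this is a presentational difference, not a new approach.
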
With this, we may quickly prove Proposition~\ref{blue}.

\begin{proof}[Proof of Proposition~\ref{blue} assuming Proposition~\ref{ifblue}]
Let $\theta \in \T^D$ be chosen uniformly at random. 

Since $d\theta $ is uniformly distributed for each $0<d\le N$ and $\{x:||x||_{\T^D}\le \frac{1}{2}N^{-2/D}\}$ has volume $N^{-2}$, by union bound \[\Prob_\theta(||d\theta||_{\T^D} > \frac{1}{2}N^{-2/D}\textrm{ for all }d \in [N]) \ge 1-1/N.\]Conditioning on this event, we have that $\Blue_{\mathbf{e},\theta}$ will not have any $3$-APs for every choice of $\mathbf{e} \in \{0,\dots,K\}^M$. Since this happens with probability $1-1/N$, we are done.
\end{proof}

\begin{proof}[Proof of Proposition~\ref{ifblue}]Let $\theta,\mathbf{e}$ be fixed. We seek to prove that if there is a blue $3$-AP, then $||d\theta||_{\T^D}\le \frac{1}{2}N^{-2/D}$ for some $d \in [N]$.

For any blue $3$-AP, $P = \{n,n+d,n+2d\}\subset \Blue_{\mathbf{e},\theta}$, we first claim that $\theta P \subset x_i +\pi(A_{\mathbf{e}_i})$ for some $i\in [M]$. 

Indeed, as $P$ is blue, there must be $i,j,k \in [M]$ such that $\theta n \in x_i+ \pi(A_{\mathbf{e}_i}), \theta(n+d) \in x_j + \pi(A_{\mathbf{e}_j}), \theta (n+2d) \in x_k+\pi(A_{\mathbf{e}_k})$. Now, since $\theta(n+2d) -2\theta (n+d) +\theta n = 0$, we should have that \begin{equation}
    x_i-2x_j+x_k \in \pi(A_{\mathbf{e}_i})-2\pi(A_{\mathbf{e}_j})+\pi(A_{\mathbf{e}_k}) = \pi(A_{\mathbf{e}_i}-2A_{\mathbf{e}_j}+A_{\mathbf{e}_k})\label{blue container}
\end{equation}(to get the RHS, we use the fact that sumsets are preserved under projections/homomorphisms). Recalling that $A_{\mathbf{e}_t} \subset B_\rho(0)$ for every $t\in [M]$ and applying triangle inequality, the RHS of Equation~\ref{blue container} is a subset of $\pi(B_{4\rho}(0))$. Hence, we get that $||x_i-2x_j+x_k||_{\T^D} \le 4\rho$ (since $x \in \pi(B_{4\rho}(0))$ implies that $4\rho \ge ||\pi^{-1}(x)||_2 \ge ||\pi^{-1}(x)||_\infty =||x||_{\T^D}$). By the design of $x_1,\dots,x_M$ (in particular, due to Proposition~\ref{welld} Condition 1), this can only occur if $i = j = k$.

So, let $i$ be such that $\theta P \subset x_i + \pi(A_{\mathbf{e}_i})$. Taking $u = \pi^{-1}(\theta n -x_i),v = \pi^{-1}(\theta d)$, we wish to apply Lemma~\ref{smallv}. Indeed, if we can confirm the hypothesis (namely $\{u,u+v,u+2v\} \subset A_{\mathbf{e}_i}$), then the conclusion of Lemma~\ref{smallv} gives $\frac{1}{2}N^{-2/D} \ge ||v||_2 \ge ||v||_\infty = ||d \theta ||_{\T^D}$. In which case we are done, since $d \in [N]$ (because $P \subset [N]$) and we have just observed $||d\theta||_{\T^D}$ is small.

We conclude by confirming the hypothesis that $\{u,u+v,u+2v\}\subset A_{\mathbf{e}_i}$. Given $\rho < 1/12$ (which is true whenever $D\ge 2$), we will have that $A_{\mathbf{e}_i}\subset B_\rho(0)\subset B_{1/12}(0)$. Now set $B:= B_{1/12}(0)$. It is clear that $\pi(u) \in \pi(B), \pi(v) \in \pi(B-B)$ and consequently $\pi(\{u,u+v,u+2v\})\subset \pi(5B)$. With $S := (-1/2,1/2]^D = \pi^{-1}(\T^D)$, since $\pi|_S$ is a bijection and $S \supset 5B$, it follows that $\pi^{-1}(\pi(\{u,u+v,u+2v\})) = \{u,u+v,u+2v\}$ (thus there is no wrap-around issue).
\end{proof}
\section{Diophantine conditions}\label{diophantine}
Let $C_1,C_2$ be some fixed positive constants where $C_2 \ge 4(c/2)^{-1}C_1 =800C_1$. And let $X= N^{100(C_2+2)/D}$ so that $X^{D/100} \ge D^{4C_1D^2/100} N^2$. 

For our purposes, we define $\Theta\subset \T^D$ to be the set of all $\theta\in \T^D$ such that \[\dim(\{\xi\in \Z^D:|\xi|<D^{C_1},||n\xi\cdot\theta||_\T <D^{C_1D}X^{-1}\})< D/100\] for each $n \in [N]$. Following the vocabulary of \cite[Section~7]{green}, $\Theta$ is our set of ``diophantine'' rotations. We will show that $\mu_{\T^D}(\Theta) = 1-o(1)$ as $D\to \infty$, meaning that generic $\theta \in \T^D$ are diophantine.

\begin{rmk}
We note that we work with a weaker notion of $\theta \in \T^D$ being ``diophantine'' than what was used in \cite[Section~7]{green}. Indeed, in \cite{green}, $\theta \in \T^D$ was called diophantine if $\theta \in \Theta$ and also for all $d \in [N/X]$,
\[\#\{n\in[X] : ||nd \theta ||_{\T^D} \le X^{-1/D}\}\le X^{9/10}.\]

This additional condition is no longer necessary for our purposes. This is another convenient simplification, as it was a bit cumbersome to prove that generic $\theta \in \T^D$ satisfied this second condition.
\end{rmk}

Here is some brief intuition for our definition of $\Theta$. Essentially it means that any sufficiently long progression with common difference $\alpha = \theta d$ for some $d\in [N]$, that the progression cannot be too concentrated around a subtorii with codimension $\ge D/100$ (as mentioned in the end of Subsection~\ref{how}, allowing such concentration would cause complications).

\begin{prp}\label{diop}For sufficiently large $D$, we have $\mu_{\T^D}(\Theta) > 1-1/N$.
\begin{proof}The work here is essentially the same as in the first part of the proof of Proposition~7.1 from \cite[p. 19]{green}. We remind the reader that $X^{D/100} \ge D^{4C_1D^2/100}N^2$ (by the assumptions at the start of this section).

Fix any linearly independent $\xi_1,\dots,\xi_{D/100} \in \Z^D$. For any $n\in [N]$ we have that \begin{align*}
\mu_{\T^D}(\{\theta:||n\xi_i\cdot\theta||_\T \le D^{C_1D}X^{-1}\textrm{ for all } i\in [D/100]\}) &= (2D^{C_1D})^{D/100}X^{-D/100}\\
&< D^{2C_1D^2/100}X^{-D/100}\\
&\le D^{-2C_1D^2/100} N^{-2}.\\
\end{align*}Thus, by union bound the measure of $\theta$ where this holds for some $n\in [N]$ is at most $D^{-2C_1D^2/100}N^{-1}$.

Now, if $\theta \not\in \Theta$, then there exists linearly independent $\xi_1,\dots,\xi_{D/100}\in \Z^D,|\xi_i|\le D^{C_1}$ such that the preceding statement holds. The number of choices of $\xi_1,\dots,\xi_{D/100}$ is at most $(3D^{C_1})^{D^2/100}<D^{2C_1D^2/100}$. Hence, by union bound, $\mu_{\T^D}(\T^D\setminus \Theta) < N^{-1}$ giving the result.
\end{proof}
\end{prp}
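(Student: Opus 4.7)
The plan is to bound $\mu_{\T^D}(\T^D \setminus \Theta)$ by a two-stage union bound, first over the integer $n \in [N]$ and the choice of linearly independent $\xi_1,\dots,\xi_{D/100} \in \Z^D$ with $|\xi_i| < D^{C_1}$, and second by estimating for each such fixed data the measure of $\theta$ for which all the inequalities $\|n\xi_i\cdot\theta\|_\T < D^{C_1 D}X^{-1}$ hold simultaneously. Indeed, if $\theta \notin \Theta$ then there is some $n \in [N]$ such that the lattice of $\xi$'s witnessing the defining inequality has dimension at least $D/100$, from which we can extract such a linearly independent tuple.

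For the per-tuple estimate, the key observation is that if $\xi_1,\dots,\xi_{D/100} \in \Z^D$ are linearly independent over $\Q$ and $n \neq 0$ is an integer, then $n\xi_1,\dots,n\xi_{D/100}$ are still linearly independent over $\Q$, so by the same Weyl equidistribution argument already used in the proof of Proposition~\ref{welld} the map $\theta \mapsto (n\xi_1\cdot\theta,\dots,n\xi_{D/100}\cdot\theta)$ pushes the uniform measure on $\T^D$ to the uniform measure on $\T^{D/100}$. Hence the set of bad $\theta$ for this $(n,\xi_1,\dots,\xi_{D/100})$ has measure exactly $(2D^{C_1 D}X^{-1})^{D/100}$, which is at most $D^{2C_1 D^2/100} X^{-D/100}$.

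It then remains to collect the union bound factors. The number of tuples of linearly independent vectors $\xi_1,\dots,\xi_{D/100}\in\Z^D$ with $|\xi_i| < D^{C_1}$ is at most $(3D^{C_1})^{D^2/100} < D^{2C_1 D^2/100}$, and there are $N$ choices of $n \in [N]$. Multiplying these together with the per-tuple measure gives
\[\mu_{\T^D}(\T^D \setminus \Theta) \;\le\; N \cdot D^{2C_1 D^2/100} \cdot D^{2C_1 D^2/100} \cdot X^{-D/100},\]
and the defining inequality $X^{D/100} \ge D^{4C_1 D^2/100} N^2$ turns the right-hand side into $N^{-1}$, as desired.

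I expect essentially no obstacle here — this is a routine Weyl-plus-union-bound argument and is essentially the first half of Green's Proposition~7.1, except that our parameters were chosen precisely so $X^{D/100}$ absorbs all the union bound costs. The only place one has to be mildly careful is verifying that multiplication by $n$ preserves $\Z$-linear independence (so that the pushforward really is uniform on $\T^{D/100}$); this is immediate since $n \neq 0$. The sizes of $C_1$ and $C_2$ have been arranged so that the two appearances of $D^{2C_1D^2/100}$ (from the tuple count and from the per-tuple factor) are overwhelmed by the factor of $D^{-4C_1D^2/100}$ coming from $X^{-D/100}$, leaving room for the extra $N^{-1}$.
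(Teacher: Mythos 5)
Your proposal is correct and follows essentially the same argument as the paper: a per-tuple measure estimate of $(2D^{C_1 D}X^{-1})^{D/100}$, followed by a union bound over the $N$ choices of $n$ and the at most $(3D^{C_1})^{D^2/100}$ tuples $\xi_1,\dots,\xi_{D/100}$, absorbed by $X^{D/100}\ge D^{4C_1D^2/100}N^2$. The only cosmetic difference is that you spell out the Weyl equidistribution step (including the observation that multiplication by $n\neq 0$ preserves $\Q$-linear independence), whereas the paper leaves this implicit.
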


\section{Red APs}\label{final}

Let $X=N^{100(C_2+2)/D}$ and $Y = \rho^{-cD}$. In both results we are assuming $D$ is sufficiently large.

\begin{prp}\label{step1}For any $\theta \in \Theta$, and any $X$-AP, $P = \{n_0,n_0+d,\dots,n_0+d(X-1)\}\subset [N]$, there exists distinct $j_1,\dots j_Y \in [M]$ such that for each $k \in [Y]$, $\theta P \cap (x_{j_k}+\pi(B_{\rho/5}(0))) \neq \emptyset$.
\begin{proof}We mimic the proof of Proposition~5.3 from \cite[p. 22-23]{green}, but make appropriate changes to make use of our strengthened version of Proposition~\ref{welld}.

Set $\alpha = \theta d$. Let $n_1 = n_0 +d\lfloor X/2\rfloor$. Note that $P \supset \{n_1+id:i\in [-X/5,X/5]\cap \Z\}$. Set
\[\Lambda = \{\xi \in \Z^D:|\xi|<\rho^{-3},||\xi\cdot \alpha||_\T \le \rho^{-2D}X^{-1}\}.\]

By the definition of $\Theta$, and taking $C_1 \ge 12$, it follows that $\dim(\Lambda)<D/100$. By Proposition~\ref{welld} Condition 2, there exists distinct $j_1,\dots,j_Y \in [M]$ and $u_1,\dots,u_Y \in \pi(B_{\rho/10}(0))$ such that
\begin{equation}
    ||\xi \cdot(x_{j_k}+u_k-\theta n_1)||_\T\le 1/100 \label{close}
\end{equation}
for all $\xi \in \Lambda,k\in [Y]$. We claim that for each $k\in [Y]$, $\theta P$ intersects $x_{j_k}+u_k+\pi(B_{\rho/10}(0))\subset x_{j_k}+\pi(B_{\rho/5}(0))$, which will conclude our proof.

We will fix $k \in [Y]$ and deduce $\theta P$ intersects $x_{j_k}+u_k+\pi(B_{\rho/10}(0))$. We are then done by repeating our argument for each $k \in [Y]$.

We proceed by a Fourier analytic argument which will make use of Equation~\ref{close}. We will also need two functions $\chi,w$ which we specify below. First, we need $\chi:\T^D \to \R$ so that:
\begin{enumerate}[label={(\arabic*a)}]
    \item \label{1a} $\chi(x)\le 0$ unless $x \in \pi(B_{\rho/10}(0))$
    \item \label{2a} $\hat{\chi}$ only takes non-negative real values
    \item \label{3a} $\hat{\chi}$ is only supported on $\{\xi:|\xi| \le \rho^{-3}\}$
    \item \label{4a} $\int \chi = 1$
    \item \label{5a} $\int |\chi| \le 3$.
\end{enumerate}
We provide such a function in Lemma~\ref{con chi}, the construction is from Lemma~B.6 of \cite{green}. Next, we need $w: \Z\to [0,\infty)$ so that:
\begin{enumerate}[label={(\arabic*b)}]
    \item \label{1b} $w$ is only supported on $[-X/5,X/5]$
    \item \label{2b} $\hat{w}$ only takes non-negative real values
    \item \label{3b} $\sum_{n \in \Z} w(n) \ge X$
    \item \label{4b} $|\hat{w}(\beta)|\le 2^{7}X^{-1} ||\beta||_{\T}^{-2}$ for all $\beta \in \T$.
\end{enumerate}
For this we can use a Fej\'er kernel (see Lemma~\ref{con w} for details).

It will suffice to prove
\begin{equation}
    \sum_{i \in \Z} w(i)\chi(\theta(n_1+id)-u_k-x_{j_k})>0\label{fourier goal}.
\end{equation}Indeed, assuming Equation~\ref{fourier goal} holds true, it will follow that there exists $i_k\in \Z$ where the summand is positive. As $w$ does not take negative values, it follows that $w(i_k)> 0$ and $\chi(\theta(n_1+i_kd)-u_k-x_{j_k})>0$ will hold. By Property~\ref{1b} we will have $i_k \in [-X/5,X/5]\cap\Z$, and by Property~\ref{1a} we will have $\theta (n_1+i_kd) \in x_{j_k}+u_k +\pi(B_{\rho/10}(0))$. It is clear then that $n'_k:=n_1+i_kd \in P$, and $\theta n'_k \in x_{j_k} + u_k +\pi(B_{\rho/10}(0))$, as desired.

We shall now prove Equation~\ref{fourier goal}. By Fourier inversion on $\chi$ (and recalling $\theta d = \alpha$), the LHS of Equation~\ref{fourier goal} may be written as

\begin{align*}
    \hspace{5em}&\hspace{-5em}\sum_{\xi \in \Z^D}\hat{\chi}(\xi)e(\xi \cdot (\theta n_1-u_k-x_{j_k}))\sum_{i\in\Z} w(i)e(i\xi \cdot \alpha)\\
    &= \sum_{\xi \in \Z^D}\hat{\chi}(\xi)e(\xi \cdot (\theta n_1-u_k-x_{j_k}))\hat{w}(-\xi\cdot \alpha)\\
    &= \sum_{\xi \in \Z^D}\hat{\chi}(\xi)\cos(2\pi \xi \cdot (\theta n_1-u_k-x_{j_k}))\hat{w}(-\xi\cdot \alpha)\numberthis\label{inversion sum}.\\
\end{align*}In the last line we took real parts, recalling that the LHS was real as were $\hat{\chi}$ and $\hat{w}$ (due to Properties~\ref{2a} and \ref{2b}).

By Property~\ref{3a}, we have that the summands of \ref{inversion sum} are only supported where $|\xi|\le \rho^{-3}$. We now consider the contributions of these summands which we split into three cases of $\xi$: (i) $\xi = 0$, (ii) $\xi \in \Lambda$, and (iii) $|\xi|\le \rho^{-3}$ yet $\xi \not \in \Lambda$.

(i): The contribution from $\xi =0$ is precisely $\hat{\chi}(0)\hat{w}(0) = (\int \chi)(\sum_{n\in \Z}w(n))$, which is $\ge X$ by Properties~\ref{4a} and \ref{3b}.

(ii): Here, $\xi \in \Lambda$. Thus, by our choice of $j_k,u_k$ (c.f. Equation~\ref{close}), we will have $\cos(2\pi \xi \cdot (\theta n_1 -u_k-x_{j_k})) >0$. By Properties~\ref{2a} and \ref{2b}, $\hat{\chi}$ and $\hat{w}$ only take non-negative real values, thus the contribution of all these terms will be non-negative.

(iii): By triangle inequality, the absolute value of the contribution to \ref{inversion sum} from these $\xi$ is at most
\begin{equation}
    \sum_{|\xi| \le \rho^{-3},\xi \not \in \Lambda} |\hat{\chi}(\xi)||\hat{w}(-\xi\cdot \alpha)| \label{third sum magnitude}.
\end{equation} Now, for any $\xi\in \Z^D$, we have $|\hat{\chi}(\xi)| \le \int |\chi| \le 3$ (applying Property~\ref{5a} for the last inequality). Meanwhile for any $\xi \in \Z^D \setminus \Lambda$ with $|\xi| \le \rho^{-3}$, we have 
\[|\hat{w}(-\xi\cdot \alpha )| \le 2^7X^{-1}||- \xi\cdot \alpha||_\T^{-2}\le 2^7 \rho^{4D}X, \] by applying Property~\ref{4b} for the first inequality and the definition of $\Lambda$ in the second inequality.

Hence, no summand from Equation~\ref{third sum magnitude} is larger than $2^9\rho^{4D}X$. Meanwhile, the number of $\xi \in \Z^D$ satisfying $|\xi|\le \rho^{-3}$ is at most $(3\rho^{-3})^D$ which assuming $D \ge 10$ is at most $2^{-10}\rho^{-4D}$. It follows that Equation~\ref{third sum magnitude} is at most $X/2$, meaning the contribution from (iii) is at most $X/2$ in magnitude.

Putting these three contributions together, we get that Equation~\ref{inversion sum} is at least $X+0-X/2 = X/2 > 0$. Hence, we see Equation~\ref{fourier goal} is indeed positive, completing our proof.
\end{proof}
\end{prp}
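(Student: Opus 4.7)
The plan is to combine Proposition~\ref{welld}(2) with a Fourier-analytic bump argument. First, set $\alpha = \theta d$ and let $n_1 = n_0 + d\lfloor X/2\rfloor$ be the midpoint of $P$, so that $P$ contains the symmetric sub-AP $\{n_1 + id : i \in [-X/5,X/5]\cap \Z\}$. I would then isolate the ``resonant frequencies''
\[
\Lambda = \{\xi \in \Z^D : |\xi| < \rho^{-3},\ ||\xi\cdot \alpha||_\T \le \rho^{-2D} X^{-1}\}.
\]
Since $d \in [N]$ and $\theta \in \Theta$, choosing $C_1 \ge 12$ ensures $\rho^{-3} \le D^{C_1}$ and $\rho^{-2D} \le D^{C_1 D}$, whence the diophantine condition forces $\dim(\Lambda) < D/100$. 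Let $V$ be the $\Q$-span of $\Lambda$ and apply Proposition~\ref{welld}(2) with $x^* = \theta n_1$, obtaining distinct $j_1,\dots,j_Y \in [M]$ and $u_1,\dots,u_Y \in \pi(B_{\rho/10}(0))$ with $||\xi\cdot(x_{j_k}+u_k-\theta n_1)||_\T < 1/100$ for every $\xi \in \Lambda$ and every $k$.

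It now suffices, for each $k$, to show that $\theta P$ meets $x_{j_k}+u_k+\pi(B_{\rho/10}(0)) \subset x_{j_k}+\pi(B_{\rho/5}(0))$. I would do this by picking two auxiliary functions: a smooth bump $\chi:\T^D \to \R$ of integral $1$, essentially supported on $\pi(B_{\rho/10}(0))$, with $\hat{\chi}\ge 0$ supported on $\{|\xi|\le \rho^{-3}\}$ and $\int |\chi| \lesssim 1$; and a Fej\'er-type weight $w:\Z \to [0,\infty)$ supported on $[-X/5,X/5]$, with total mass $\ge X$, $\hat{w}\ge 0$, and decay $|\hat{w}(\beta)| \ll X^{-1}||\beta||_\T^{-2}$. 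Both are standard constructions that I would import as black boxes. The goal is to show
\[
\Sigma_k := \sum_{i \in \Z} w(i)\, \chi\bigl(\theta(n_1+id)-u_k-x_{j_k}\bigr) > 0,
\]
because then some nonzero summand forces the corresponding $n_1+id \in P$ to lie in the desired translate.

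Fourier-inverting $\chi$ and taking real parts rewrites $\Sigma_k$ as a sum over $\xi \in \Z^D$ of $\hat{\chi}(\xi)\cos(2\pi \xi\cdot(\theta n_1-u_k-x_{j_k}))\hat{w}(-\xi\cdot\alpha)$, supported on $|\xi|\le \rho^{-3}$. I would split this into three pieces. The $\xi=0$ term is $\hat{\chi}(0)\hat{w}(0) \ge X$. For $\xi \in \Lambda\setminus\{0\}$, the closeness bound $||\xi\cdot(x_{j_k}+u_k-\theta n_1)||_\T < 1/100$ makes the cosine positive, and with $\hat{\chi},\hat{w}\ge 0$ this block contributes non-negatively. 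For the remaining $\xi$ with $|\xi|\le \rho^{-3}$ but $\xi \notin \Lambda$, the definition of $\Lambda$ gives $||\xi\cdot\alpha||_\T > \rho^{-2D}X^{-1}$, hence $|\hat{w}(-\xi\cdot\alpha)| \ll \rho^{4D}X$; combined with $|\hat{\chi}(\xi)| \lesssim 1$ and the trivial count $\ll \rho^{-3D}$ of relevant $\xi$, the tail is $\ll \rho^D X$, well below $X/2$ for large $D$. Summing, $\Sigma_k \ge X - X/2 > 0$.

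The main obstacle is the quantitative alignment between three scales: the bandwidth $\rho^{-3}$ of $\hat{\chi}$, the diophantine threshold $\rho^{-2D}X^{-1}$ defining $\Lambda$, and the quadratic decay of $\hat{w}$. One needs $\Lambda$ wide enough to absorb every frequency where the cosine could conceivably be negative, yet narrow enough (via the $\dim < D/100$ constraint coming from $\Theta$) that Proposition~\ref{welld}(2) still applies. Getting the exponents to match up so that the three pieces of the Fourier sum split as $X$, non-negative, and $o(X)$ respectively is the delicate balance; constructing $\chi$ with the stipulated bandwidth is standard but nontrivial, and would be delegated to a separate lemma.
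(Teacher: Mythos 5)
Your proposal follows the paper's argument essentially line for line: same midpoint $n_1$, same resonant set $\Lambda$, same invocation of the diophantine condition and Proposition~\ref{welld}(2), the same pair of bump/Fej\'er functions $\chi,w$, and the identical three-way decomposition of the Fourier sum into $\xi = 0$, $\xi \in \Lambda$, and off-$\Lambda$ contributions with matching quantitative bounds. This is the paper's proof, correctly reconstructed.
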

We now can prove Proposition~\ref{red}. Let $K = \lfloor \rho N^{4/D}\rfloor\le N^{4/D}-1$.
\begin{proof}We intend to do a union bound over all $X$-APs $P\subset [N]$. Fix some $X$-AP $P \subset[N]$ and some $\theta \in \Theta$.

By Proposition~\ref{step1}, there exists distinct $j_1,\dots,j_Y\in [M]$, $n'_1,\dots,n'_Y \in P$ such that $\theta n'_i \in x_{j_i} +\pi(B_{\rho/5}(0))$ for each $i \in [Y]$. For each $i \in [Y]$, let $d_i = ||\pi^{-1}(x_{j_i} - \theta n'_i)||_2$, and observe $d_i \in [0,\rho/5)$. 
For each $i \in [Y]$, there exists $k \in \{0,\dots,K\}$ such that $d_i \in [kN^{-4/D},(k+1)N^{-4/D})$, let $k_i$ denote this $k$. Now, $\mathbf{e}$ samples $\{0,\dots,K\}^M$ uniformly at random, hence
\begin{align*}\Prob_\mathbf{e}(\mathbf{e}_{j_i} \neq k_i \textrm{ for all }i \in [Y])&=(1-1/(K+1))^Y\\
&\le (1-N^{-4/D})^{\rho^{-cD}}\\
&= (1-D^{-2cD})^{D^{4cD}}\\
&\le \exp(-D^{2cD}) \\
&\le N^{-3}.\\\end{align*}where in the last line we use the fact that $D^{2cD} > \log(N^3) = O(\log(D)D^2)$.

Now, for $i\in [Y]$, if $\mathbf{e}_{j_i} = k_i$, then we have that $\theta n'_i \in x_{j_i} + \pi(A_{\mathbf{e}_{j_i}})$ meaning $P$ is not a red AP, as desired. By a union bound, as there are less than $N^2$ different $X$-APs $P\subset [N]$ to consider, we see that $\Prob_\mathbf{e}(\Red_{\mathbf{e},\theta} \textrm{ has an }X\textrm{-AP})\le N^{-1}$ as desired.

\end{proof}

\begin{appendix}
\section{Lattices}\label{ax lattice}

In Appendix~\ref{ax lattice}, we continue to use the convention from the main paper that for $\xi \in \Z^D$, we write $|\xi|$ to denote $||\xi||_\infty$.

We must recall an old result from lattice reduction theory. The following was originally proven by Mahler \cite{mahler}, but some may find \cite[Lemma~8 in Section~V.4]{cassels} a more accessible reference.
\begin{lem}\label{short basis}\cite{cassels,mahler} Let $\Lambda$ be a lattice with rank $m$, with $x_1,\dots,x_m \in \Lambda$ being linearly independent. Then there exists an integral basis $w_1,\dots,w_m$ of $\Lambda$ such that $\max_{i\in [m]} |w_i| \le \frac{m}{2} \max_{i\in [m]}|x_i|$.
\end{lem}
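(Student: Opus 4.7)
The plan is to build $w_1, \dots, w_m$ inductively along the flag $V_0 \subset V_1 \subset \cdots \subset V_m$, where $V_i := \Span_\R(x_1, \dots, x_i)$, by producing each $w_i$ as a generator of the rank-one quotient $\Lambda_i / \Lambda_{i-1}$ (with $\Lambda_i := \Lambda \cap V_i$) and reducing the lower coordinates modulo the previously built $w_1, \dots, w_{i-1}$. The inductive invariant will be that $\{w_1, \dots, w_i\}$ is a $\Z$-basis of $\Lambda_i$ and that $w_i$ is expressed in the $(x_1, \dots, x_i)$ basis with suitably small real coefficients.

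Concretely, for each $i$ write every $v \in \Lambda_i$ uniquely as $v = \sum_{j \le i} c_j(v)\,x_j$ with real coefficients. The map $v \mapsto c_i(v)$ is a homomorphism $\Lambda_i \to \R$ with kernel $\Lambda_{i-1}$ and image a discrete subgroup $d_i \Z \subset \R$, and $0 < d_i \le 1$ since $c_i(x_i) = 1$. Pick any $v_i \in \Lambda_i$ with $c_i(v_i) = d_i$, then subtract integer multiples of $w_{i-1}, w_{i-2}, \dots, w_1$ in that order to produce $w_i$ satisfying $c_i(w_i) = d_i$ and $|c_j(w_i)| \le d_j/2 \le 1/2$ for each $j < i$. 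This is well-posed because the inductive construction forces $c_j(w_j) = d_j$ and $c_k(w_j) = 0$ for $k > j$, so subtracting a multiple of $w_j$ alters $c_j$ in discrete steps of size $d_j$ while leaving the higher-indexed coordinates (and hence any already-reduced coordinate) untouched.

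Two checks remain. First, $\{w_1, \dots, w_m\}$ really is a $\Z$-basis of $\Lambda$: given $v \in \Lambda = \Lambda_m$, we have $c_m(v) = n d_m$ for some $n \in \Z$, so $v - n w_m \in \Lambda_{m-1}$ and induction finishes the argument, with linear independence coming for free from the upper triangular structure. Second, the norm bound drops out of the triangle inequality: with $X := \max_i |x_i|$,
\[
|w_i| \;\le\; d_i\,|x_i| + \sum_{j < i} |c_j(w_i)|\,|x_j| \;\le\; \left(d_i + \tfrac{i-1}{2}\right) X,
\]
which matches the lemma's linear-in-$m$ scaling (with a constant of the claimed order, after the trivial observation that $d_i \le 1$, and tighter when $d_i \le 1/2$). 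I do not expect any serious obstacle: the whole argument is careful bookkeeping once the flag $\Lambda_0 \subset \cdots \subset \Lambda_m$ is fixed, and the only subtlety is performing the coordinate reductions in decreasing order of $j$ so that each reduced coordinate stays reduced.
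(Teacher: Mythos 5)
The paper offers no proof of this lemma; it is cited to Mahler and to Cassels (Section V.4, Lemma 8), so your attempt can only be judged on its own terms. What you have is in fact the standard argument behind that citation: fix the flag $\Lambda_i = \Lambda \cap \Span_\R(x_1,\dots,x_i)$, note that the last-coordinate map $c_i : \Lambda_i \to \R$ has kernel $\Lambda_{i-1}$ and discrete image $d_i\Z$, lift a generator of $\Lambda_i/\Lambda_{i-1}$, and reduce the lower coordinates in decreasing index order so that each fixed coordinate stays fixed. All of this is sound, and your remark about the reduction order is exactly the right point to flag; the basis-and-independence check via the upper-triangular matrix $(c_j(w_i))$ is also correct.

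Where you fall short is the constant. Your final estimate is $|w_i| \le \bigl(d_i + \tfrac{i-1}{2}\bigr)X$ with $X := \max_j|x_j|$, which with $d_i \le 1$ gives only $\tfrac{m+1}{2}X$, not the claimed $\tfrac{m}{2}X$. You gesture at the remedy (``tighter when $d_i \le 1/2$'') but do not close it. The missing observation is that since $c_i(x_i) = 1 \in d_i\Z$, the quantity $1/d_i$ is a positive integer, so either $d_i = 1$ or $d_i \le 1/2$. When $d_i \le 1/2$ your bound already improves to $\tfrac{i}{2}X \le \tfrac{m}{2}X$. When $d_i = 1$, take $w_i = x_i$ outright: it lies in $\Lambda_i$, satisfies $c_i(x_i) = 1 = d_i$ and $c_j(x_i) = 0$ for $j < i$, hence is an equally valid (already reduced) lift of a generator of the quotient, and gives $|w_i| = |x_i| \le X$. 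Either way $\max_i|w_i| \le \max\bigl(X, \tfrac{m}{2}X\bigr)$, which equals $\tfrac{m}{2}X$ once $m \ge 2$. (For $m = 1$ the lemma as stated is false --- take $\Lambda = \Z$ and $x_1 = 1$ --- so $m \ge 2$ is implicit; this is harmless for the paper's application in Lemma~\ref{lattice gen}, where any absolute constant would do.)
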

We can now prove a necessary lemma.
\begin{lem}\label{lattice gen} Let $Q\ge 1 $ be a parameter. Let $V \le \Q^D$ be a subspace spanned over $\Q$ by linearly independent vectors $v_1,\dots,v_m\in \Z^D$ where $|v_i|\le Q$ for each $i\in [m]$. Then there exists vectors $w^{(1)},\dots,w^{(m)} \in V\cap \Z^D\cap [-QD,QD]^D$ such that every $x\in V\cap \Z^D$ with $|x|\le Q$ is a (unique) $\Z$-linear combination $x = \sum_{i=1}^m n_i w^{(i)}$ with $|n_i| \le m!(DQ)^m$.
\begin{proof}We present a slight modification of the proof of Lemma~A.2 from \cite[p. 62]{green}.

Write $X := \{x \in V \cap \Z^D: |x| \le Q\}$. Let $\Lambda$ denote the lattice generated by $X$ (meaning $v\in \Lambda$ if and only if it can be written as a $\Z$-linear combination $v= \sum_{x\in X}n_x x$). Clearly, since we assumed $v^{(1)},\dots,v^{(m)} \in X$ are linearly independent and span $V\supset \Lambda$ (over $\Q$), we see that $\Lambda$ has rank $m$.

Applying Lemma~\ref{short basis} to $\Lambda$, we find an integral basis $w^{(1)},\dots,w^{(m)}$ of $\Lambda$ such that $\max_{i \in [m]}|w^{(i)}| \le D\max_{i\in [m]}|v_i| \le DQ$.

Consider the $m\times D$-matrix whose $(i,j)$-entry is the $j$-coordinate $w_j^{(i)}$. Since $w^{(1)},\dots,w^{(m)}$ are linearly independent and are the column vectors of our matrix, said matrix has rank $m$ and thus has an invertible $m\times m$-minor. By relabelling axes, we may without loss of generality assume that $(w_j^{(i)})_{1\le i,j \le m}$ is such a minor. We will set $A$ to be the inverse of $(w_j^{(i)})_{1\le i,j \le m}$.

Now, for $x \in \Lambda$, we have $x= \sum_{i=1}^m n_iw^{(i)}$ for some unique choice of integers $n_1,\dots,n_m$. In particular, we will have that $A(x_1,\dots,x_m)^T = (n_1,\dots,n_m)^T$. By the formula for the inverse of a matrix in terms of its adjugate, we get that entries of $A$ belong to the set $\{\frac{a}{q}: a \in \Z, |a|\le (m-1)!(DQ)^{m-1}\}$, where $q := \det((w_j^{(i)})_{1\le i,j \le m})\in \Z\setminus \{0\}$. Thus, we will have that each $n_i$ will be an integer with absolute value at most $m!(DQ)^{m-1}|x|$.

So, when $x\in X \subset \Lambda$, we get that $|n_i| \le m!(DQ)^{m-1}Q\le m!(DQ)^m$ as desired.
 
\end{proof}
\end{lem}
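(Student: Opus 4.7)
The plan is to construct the $w^{(i)}$ as a short basis of a specific sublattice of $V \cap \Z^D$ via Lemma~\ref{short basis}, and then bound the coefficients in the expansion of any small vector using Cramer's rule applied to an invertible $m\times m$ minor of the basis matrix.

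First I would let $\Lambda$ denote the $\Z$-span of $X := \{x \in V \cap \Z^D : |x|\le Q\}$. Since the given $v_1,\dots,v_m$ lie in $X$ and are linearly independent, and $\Lambda \subseteq V$, the lattice $\Lambda$ has rank exactly $m$. Applying Lemma~\ref{short basis} to $\Lambda$ with the linearly independent family $v_1,\dots,v_m$ produces an integral basis $w^{(1)},\dots,w^{(m)}$ of $\Lambda$ satisfying $\max_i |w^{(i)}| \le \tfrac{m}{2}\max_i |v_i| \le \tfrac{m}{2}Q \le DQ$, and hence each $w^{(i)}$ lies in $V\cap \Z^D \cap [-DQ,DQ]^D$ as required. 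Uniqueness of the expansion $x=\sum_i n_i w^{(i)}$ for $x\in X\subset\Lambda$ is automatic from $\{w^{(i)}\}$ being a basis of $\Lambda$.

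The remaining task is bounding the integers $n_i$. I would assemble the basis into an $m\times D$ matrix $W$ with entries $W_{ij} = w^{(i)}_j$. Since the $w^{(i)}$ are linearly independent, $W$ has rank $m$, so some $m\times m$ minor of $W$ is invertible; after relabelling the $D$ coordinate axes (which does not affect any $|\cdot|$-norm), I may assume this invertible minor is $M := (w^{(i)}_j)_{1 \le i,j\le m}$. Reading off the first $m$ coordinates of the identity $x = \sum_i n_i w^{(i)}$ gives the linear system $(n_1,\dots,n_m)\,M = (x_1,\dots,x_m)$ in row-vector form, so $(n_1,\dots,n_m) = (x_1,\dots,x_m)\,M^{-1}$.

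To finish I would bound the entries of $M^{-1}$ via the adjugate formula: each entry has the form $\pm C/\det(M)$, where $C$ is the determinant of an $(m-1)\times(m-1)$ submatrix of $M$ with integer entries of absolute value at most $DQ$, and where $\det(M)\in \Z\setminus\{0\}$, so $|\det M|\ge 1$. Expanding $C$ as a sum over permutations gives $|C|\le (m-1)!(DQ)^{m-1}$, so every entry of $M^{-1}$ is bounded by $(m-1)!(DQ)^{m-1}$ in absolute value. Consequently for $x\in X$ we obtain $|n_i|\le m\cdot(m-1)!(DQ)^{m-1}\cdot Q\le m!(DQ)^m$, as claimed. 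The only substantive input is Lemma~\ref{short basis}; the main source of care in the argument is the adjugate calculation and the observation that $\det(M)$ is a nonzero integer, which are standard but must be organized cleanly.
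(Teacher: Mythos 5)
Your proof is correct and follows essentially the same route as the paper's: you define $\Lambda$ as the $\Z$-span of the short integer vectors in $V$, apply Lemma~\ref{short basis} to obtain a short basis, and then bound the expansion coefficients by inverting an invertible $m\times m$ minor via the adjugate formula. The only cosmetic difference is that you carry out the matrix algebra with row vectors and $(n_1,\dots,n_m) = (x_1,\dots,x_m)M^{-1}$ rather than the paper's transposed statement, but the bounds and logic are identical.
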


\section{Fourier Analysis}\label{ax fourier}
In this appendix, we construct two cutoff functions, for use in Proposition~\ref{step1}. These cutoff functions are both variants of the classical Fej\'er kernel construction. 

In the main paper, we wrote $\pi$ to denote the natural projection from $\R^D\to \T^D$. In Appendix~\ref{ax fourier}, we will continue to use this projection, but will also sometimes write $\pi$ to denote the constant $3.141592\dots$. Our usage should always be clear from context.

\begin{lem}\label{con w}Let $X\ge 1$. There exists $w:\Z\to [0,\infty)$ so that:
\begin{enumerate}
    \item \label{1B} $w$ is only supported on $[-X/5,X/5]$
    \item \label{2B} $\hat{w}$ only takes non-negative real values
    \item \label{3B} $\sum_{n \in \Z} w(n) \ge X$
    \item \label{4B} $|\hat{w}(\beta)|\le 2^{7}X^{-1} ||\beta||_{\T}^{-2}$ for all $\beta \in \T$.
\end{enumerate}
\begin{proof}This is a standard Fej\'er kernel construction. For $S\subset \R$, let $\mathds{1}_S$ to denote the indicator function defined from $\Z\to \{0,1\}$ such that $\mathds{1}_S(n) =1\iff n \in S$. 

We take
\[w:= \frac{100}{X}\mathds{1}_{[-X/10,X/10]}*\mathds{1}_{[-X/10,X/10]}(n).\]It is clear that $w$ only takes non-negative values, thus $w$ is a function from $\Z\to [0,\infty)$, it remains to verify its properties.

It is straightforward to see that Properties~\ref{1B}, \ref{2B}, and \ref{3B} all hold. Indeed, it suffices to confirm that $\supp(w) = [-2\lfloor X/10 \rfloor,2\lfloor X/10\rfloor] \subset [-X/5,X/5]$, $\sum_{n\in \Z}w(n) = \frac{100}{X} |\Z \cap [-\lfloor X/10 \rfloor,\lfloor X/10\rfloor]|^2 \ge X$, and $w(n) = w(-n)$ respectively are true. 

For Property~\ref{4B}, we evaluate the Fourier transform explicitly as
\[\hat{w}(\beta)= \frac{100}{X}\left|\sum_{|n|\le X/10} e(-\beta n)\right|^2.\]When $\beta = 0$, we consider Property~\ref{4B} to be vacuously true, so we may assume $\beta \in \T \setminus \{0\}$. Evaluating the above geometric sum for $\beta \in \T\setminus\{0\}$, we get\[\left| \sum_{|n|\le  X/10}e(-\beta n) \right|\le \frac{2}{|1- e(\beta)|} = \frac{1}{|\sin \pi\beta|} \le ||\beta||_\T^{-1}.\]
The result follows.

\end{proof}
\end{lem}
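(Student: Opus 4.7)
The plan is to use the classical Fejér kernel construction, setting
\[ w := \tfrac{100}{X}\,\mathds{1}_I * \mathds{1}_I, \qquad I := [-X/10,X/10]\cap\Z, \]
and to verify the four properties in turn. The convolution structure immediately gives three of the four properties essentially for free, leaving only the decay estimate in Property~\ref{4B} as the item requiring actual computation.

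First, $w$ takes values in $[0,\infty)$ and has support contained in $I+I \subseteq [-X/5,X/5]$, which is Property~\ref{1B}. By the convolution-multiplication rule on $\Z$,
\[ \hat{w}(\beta) = \tfrac{100}{X}\bigl|\widehat{\mathds{1}_I}(\beta)\bigr|^2 \ge 0, \]
giving Property~\ref{2B} (and the bound we need for Property~\ref{4B} will also hinge on this identity). For Property~\ref{3B}, one uses $\sum_n (f*g)(n) = (\sum_n f)(\sum_n g)$ with $f=g=\mathds{1}_I$: since $|I| = 2\lfloor X/10\rfloor + 1 \ge X/5$ (taking $X$ moderately large, or else the lemma is trivial), we get $\sum_n w(n) \ge \tfrac{100}{X}(X/5)^2 = 4X \ge X$.

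The remaining and only nontrivial step is Property~\ref{4B}. Here I would write $\widehat{\mathds{1}_I}(\beta) = \sum_{|n|\le X/10} e(-\beta n)$ as a geometric sum (Dirichlet kernel) and bound it by $1/|\sin(\pi\beta)|$ via the standard telescoping $|e(-\beta n_0)(1 - e(-\beta(2\lfloor X/10\rfloor+1)))/(1-e(-\beta))|$. Then use the elementary inequality $|\sin(\pi\beta)| \ge 2\|\beta\|_\T$ (valid for all $\beta\in\T$ by concavity of $\sin$ on $[0,\pi/2]$ and the definition $\|\beta\|_\T = \min_{n\in\Z}|\beta-n|\le 1/2$) to conclude
\[ |\widehat{\mathds{1}_I}(\beta)| \le \tfrac{1}{2\|\beta\|_\T}. \]
Squaring and multiplying by $100/X$ yields $\hat{w}(\beta) \le 25\,X^{-1}\|\beta\|_\T^{-2} \le 2^7 X^{-1}\|\beta\|_\T^{-2}$, as required.

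No step presents a real obstacle: the only thing worth double-checking is the sharp form of the sine lower bound $|\sin(\pi\beta)| \ge 2\|\beta\|_\T$, and the mild issue that the bound is vacuous at $\beta=0$ (where the factor $\|\beta\|_\T^{-2}$ is infinite, so Property~\ref{4B} holds automatically). Everything else is a transparent consequence of the convolution structure.
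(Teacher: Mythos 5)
Your proof is correct and is essentially the paper's argument: the same normalized Fej\'er kernel $\frac{100}{X}\mathds{1}_I*\mathds{1}_I$, the same support/positivity/mass observations, and the same Dirichlet-kernel geometric-sum estimate, your only (harmless) variation being the sharper bound $|\sin \pi\beta|\ge 2||\beta||_\T$ where the paper uses $|\sin\pi\beta|\ge ||\beta||_\T$. One tiny quantitative point: your intermediate claim $|I|=2\lfloor X/10\rfloor+1\ge X/5$ can fail for some $X$ (e.g.\ $X=19$), but Property~3 only needs $|I|\ge X/10$, which holds for every $X\ge 1$, so the conclusion is unaffected.
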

We remind our readers that $B_\ep(0)$ denotes the Euclidean ball of radius $\epsilon$ in $\R^D$, and that $\pi:\R^D\to \T^D$ denotes the natural projection. 
\begin{lem}\label{con chi}
Let $D$ be sufficiently large and $\rho \le D^{-4}$. There exists $\chi:\T^D \to \R$ so that:
\begin{enumerate}
    \item \label{1A} $\chi(x)\le 0$ unless $x \in \pi(B_{\rho/10}(0))$
    \item \label{2A} $\hat{\chi}$ only takes non-negative real values
    \item \label{3A} $\hat{\chi}$ is only supported on $\{\xi:|\xi| \le \rho^{-3}\}$
    \item \label{4A} $\int \chi = 1$
    \item \label{5A} $\int |\chi| \le 3$.
\end{enumerate}
\begin{proof} We repeat the proof of Proposition~B.6 from \cite[p. 66-67]{green}.

Rather than work with Euclidean balls and the $\ell^2$-norm, it will be more convenient to work with the distance $||\cdot ||_\T$ directly. Since $D^{1/2} ||\pi(t)||_\T \ge ||t||_2$ for $t \in (-1/4,1/4]^D\supset B_{\rho/10}(0)$, thus we can replace Property~\ref{1A} with the stronger property that $\chi\le 0$ unless $||x||_{\T^D}\le \rho^{7/6} \le \rho D^{-1/2}/10$.

Let $k = \lfloor \rho^{-3}\rfloor$. Consider
\begin{align*}
    \psi(x)&:= (2D+\sum_{i=1}^D (e(x_i)-e(-x_i)))^k-4^k(D-\rho^{7/3})^k\\
    &= 4^k(\cos^2(\pi x_1) + \dots +\cos^2(\pi x_D))^k-4^k(D-\rho^{7/3})^k.\\
\end{align*}Since $\cos(\pi t) \le 1-t^2$ for $|t|\le1/2$, we have that if $||x||_{\T^D} \ge \rho^{7/6}$ then\[0\le \cos^2(\pi x_1) + \dots+\cos^2(\pi x_D) \le D-\rho^{7/3}\] causing $\psi(x)\le 0$. By expanding out the first definition of $\psi$, it is clear to see that $\hat{\psi}(\xi)$ is only supported on $|\xi|\le k$ (in fact on $||\xi||_1\le k$), and $\hat{\psi}(\xi) \ge 0$ for all $\xi \in \Z^D \setminus \{0\}$. 

We next wish to show that $\int \psi = \hat{\psi}(0) > 0$. Using the inequality $\cos^2(\pi t) \ge 1-\pi^2t^2$ for $|t|\le 1/2$, we conclude that if $||x_i||_{\T}\le \frac{1}{4 \sqrt{k}}$ for all $i\in [D]$ then \[ (\cos^2(\pi x_1) +\dots + \cos^2(\pi x_D))^k \ge D^k(1-1/k)^k \ge \frac{1}{3}D^k .\]Therefore\[\int_{\T^D} (\cos^2(\pi x_1) +\dots + \cos^2(\pi x_D))^k \ge \left(\frac{1}{2\sqrt{k}}\right)^D \frac{1}{3}D^k > 2 k^{-D}D^k.\]

Meanwhile, using the facts that $k = \lfloor \rho^{-3}\rfloor,\rho \le D^{-4}$, and $D$ is sufficiently large, we have that
\begin{equation}(D-\rho^{7/3})^k \le D^k \exp(- \rho^{7/3}k/D) < D^k k^{-D}.\label{bounded}\end{equation} Combining this with our previous paragraph, we have that $\int \psi > k^{-D} (4D)^k$.

We therefore take $\chi := \frac{\psi}{\int \psi}$. It is immediately clear that $\chi$ satisfies Properties~\ref{1A}, \ref{2A}, \ref{3A}, and \ref{4A}, leaving us to confirm Property~\ref{5A} also holds.

We write $\psi = \psi_+ - \psi_-$ in positive and negative parts. For all $x\in \T^D$, we have that $\psi_-(x) \le 4^k(D-\rho^{7/3})^k < k^{-D} (4D)^k$ (where in the last inequality we reuse Equation~\ref{bounded}). Hence, $\int \psi_-\le k^{-D}(4D)^k <\int \psi$. Since $|\psi| = \psi + 2\psi_-$, we have that $\int |\psi| \le 3 \int \psi$. Thus, we conclude Property~\ref{5A} also holds and we are done.
\end{proof}
\end{lem}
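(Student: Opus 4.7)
The plan is to build $\chi$ as a normalized Fej\'er-type product kernel, adapting the classical one-dimensional construction to produce a band-limited, Fourier-nonnegative bump function on $\T^D$. Set $k = \lfloor \rho^{-3}\rfloor$ and define
\[
\psi(x) := \Bigl(2D + \sum_{i=1}^D \bigl(e(x_i)+e(-x_i)\bigr)\Bigr)^{k} - 4^{k}(D-\rho^{7/3})^{k} = 4^{k}\Bigl(\sum_{i=1}^{D}\cos^{2}(\pi x_i)\Bigr)^{k} - 4^{k}(D-\rho^{7/3})^{k}.
\]
The first expression is a polynomial of total degree at most $k$ in the characters $e(\pm x_i)$, so $\hat\psi$ is supported on $\{\xi : \|\xi\|_{1}\le k\}\subseteq \{|\xi|\le \rho^{-3}\}$, and since we are raising a polynomial with non-negative coefficients to a positive integer power, all non-constant Fourier coefficients of $\psi$ are non-negative. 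Subtracting the constant only modifies $\hat\psi(0)=\int\psi$; once I verify $\int\psi>0$, the normalization $\chi := \psi/\int\psi$ immediately supplies Properties~\ref{2A}, \ref{3A}, \ref{4A}.

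For Property~\ref{1A}, I use the elementary bound $\cos(\pi t) \le 1-t^2$ valid for $|t|\le 1/2$: if $\|x\|_{\T^D}\ge \rho^{7/6}$ then some coordinate satisfies $\|x_i\|_\T \ge \rho^{7/6}$, so $\cos^{2}(\pi x_i)\le 1-\rho^{7/3}$ and $\sum_i \cos^{2}(\pi x_i) \le D-\rho^{7/3}$, forcing $\psi(x)\le 0$. Since $\|\pi^{-1}(x)\|_{2}\le D^{1/2}\|x\|_{\T^D}$ and $D^{1/2}\rho^{7/6}\le \rho/10$ for $D$ large (using $\rho\le D^{-4}$), this strengthens to $\{\psi>0\}\subseteq \pi(B_{\rho/10}(0))$.

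The quantitative heart of the proof is showing that $\int\psi$ is positive, with enough margin to also control $\int|\chi|$. The lower bound uses $\cos^2(\pi t)\ge 1-\pi^{2}t^{2}$ on $|t|\le 1/2$: on the box $\{\|x_i\|_\T \le 1/(4\sqrt k)\}$ of volume $(2\sqrt k)^{-D}$ we get $\sum_i \cos^{2}(\pi x_i) \ge D(1-1/k)$, hence $(\sum\cos^{2})^{k}\ge D^{k}/3$, which integrates to $\int(\sum\cos^{2})^{k}\ge 2 D^{k} k^{-D}$ for $D$ large. On the other hand $(D-\rho^{7/3})^{k}\le D^{k}\exp(-\rho^{7/3} k/D)\le D^{k} k^{-D}$, because $\rho^{7/3} k/D\ge \rho^{-2/3}/D\ge D^{5/3}$ dominates $D\log k$ in our parameter regime. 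Subtracting gives $\int\psi \ge (4D)^{k} k^{-D}>0$.

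Finally, split $\psi=\psi_{+}-\psi_{-}$. Pointwise $\psi\ge -4^{k}(D-\rho^{7/3})^{k}$ since the $k$-th power term is non-negative, so $\int\psi_{-}\le 4^{k}(D-\rho^{7/3})^{k}\le (4D)^{k} k^{-D}\le \int\psi$, whence $\int|\psi|=\int\psi + 2\int\psi_{-}\le 3\int\psi$, giving Property~\ref{5A}. The main obstacle is precisely the quantitative comparison in the previous paragraph: the lower bound for $\int(\sum\cos^{2})^{k}$ from concentration near the origin must robustly exceed the uniform upper bound on the subtracted peak $(D-\rho^{7/3})^{k}$. Both estimates rely on $k$ being much larger than $D$, and the specific choice $\rho\le D^{-4}$, $k=\lfloor\rho^{-3}\rfloor$ is calibrated so that $\rho^{7/3}k=\rho^{-2/3}\gg D\log D$, which is exactly what makes the construction close.
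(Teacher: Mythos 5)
Your proposal is correct and is essentially the paper's own argument: the same Fej\'er-type kernel $\psi = 4^k(\sum_i\cos^2(\pi x_i))^k - 4^k(D-\rho^{7/3})^k$ with $k=\lfloor\rho^{-3}\rfloor$, the same cosine inequalities for the support and positivity estimates, and the same $\psi_+,\psi_-$ split for $\int|\chi|\le 3$ (you even silently correct the paper's typo $e(x_i)-e(-x_i)$ to $e(x_i)+e(-x_i)$). The only nitpick is that your intermediate bound ``$\rho^{7/3}k/D\ge D^{5/3}$ dominates $D\log k$'' should compare $\rho^{-2/3}/D$ with $D\log k$ directly, since $\log k$ need not be $O(\log D)$ when $\rho\ll D^{-4}$; the comparison still holds for all $\rho\le D^{-4}$ with $D$ large, so the proof goes through.
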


\end{appendix}

\end{document}